
\documentclass{ReAnal-axSIH}

\newtheorem{theorem}{Theorem}
\newtheorem{proposition}[theorem]{Proposition}

\theoremstyle{definition}

\newtheorem{definitions}[theorem]{\font\SweD =cmssi10\SweD Definitions\bf}
\newtheorem{remark}[theorem]{\font\SweD =cmssi10\SweD Remark\bf}

%                      Macros of Seppo Hiltunen
%                    document begins on line 348

\newcommand\bbC{\mathbb C}
\renewcommand\math[1]{\hbox{$\kern0.4mm#1\kern0.4mm$}}% the same as above but also putting an extra space of .4mm:s before and after
\newcommand\mathss[3]{\hbox{\kern0.17mm\kern.#1mm$#3$\kern.#2mm\kern0.17mm}}% the same as above but allowing to choose the space before and after; e.g. `` ... that \mathss23{x=y} holds ... '' is the same as `` ... that \PouN$ \kern0.17mm\kern.2mm  x=y  \kern0.3mm\kern.17mm$  holds ... '' , " \mathss{25}{2}{\roman x}. " is the same as " \math{\roman x}. "
\def\rbrakf{\kern.9mm]\kern.2mm\lower.8mm\hbox{\font\SweD =cmr5\SweD f}\kern.2mm} % [f,g]_f : the function x mapsto (fx,gx) , e.g. $[\,\sp f\sp,\sp g\rbrakf$
\def\bmii#1#2{\hbox{\font\=cmmib#1\#2}}
\newcommand\bosy{\boldsymbol}
\def\expnota^#1]_#2{\,^{#1\,]{_{}}_{\roman{#2}}}}

% macros for Basic REAL ANALYSIS

 % N = \N = set of nonempty natural numbers
\def\bbNo{\mathbb N\kern.15mm\lower.65mm\hbox{\font\SweD =cmr6\SweD 0}\kern.1mm} % N_0 = \No = set of natural numbers
\def\sbbNo{{\mathbb N\kern.07mm\lower.45mm\hbox{\font\SweD =cmr5\SweD 0}\kern.1mm}} % the preceding for sub- and superscripts
\def\bbR{\mathbb R} % = \Re = set of real numbers
\newcommand\ssbb[3]{\hspace{.#1mm}\mathbb#3\hspace{.#2mm}} % e.g. $(\ssbb42 R)$ gives $(\hspace{.4mm}\mathbb R\hspace{.2mm})$ , \ssbb00 R = \mathbb R
 % the same as above but with \bar over
\def\lbb#1_#2{\mathbb#1\kern.2mm\raise.52mm\hbox{$_{_{#2}}$}} % e.g., to get \mathbb Z_- use \lbb Z_-
\def\rbb#1^#2{\mathbb#1\kern.2mm\lower.55mm\hbox{$^{^{#2}}$}} % e.g., to get \mathbb R^+ use \rbb R^+
\def\Rlplus{\mathbb R\kern.2mm\lower.33mm\hbox{\font\SweD =cmr5\SweD +}} % \Rlplus = R_+ = { t : t real and t \ge 0 } = \lbb R_+
\def\fbbR{\raise1.2mm\hbox{\font\SweD =cmr5\SweD f}\kern.3mm\mathbb R} % the standard field structure of real numbers
\def\tfbbR{\raise1.2mm\hbox{\font\SweD =cmr5\SweD tf}\kern.3mm\mathbb R} % standard topological field of real numbers
\def\stfbbR{{\raise.5mm\hbox{\font\SweD =cmr5\SweD t\kern-.1mmf}\kern.3mm\mathbb R}} % the preceding for sub- and superscripts
\newcommand\tvbbR[1]{\raise1.35mm\hbox{\font\SweD=cmr5\SweD t}\raise1.65mm\hbox{\font\SweD=cmss5\SweD v}\kern.2mm\mathbb R\kern.3mm\kern.#1mm} % use e.g. as in $\tvbbR5^{3.}$ , $\tvbbR{45}^{\ssmb N}$ , $\tvbbR5^k$ to get R^3 , R^N , R^k considered as a real topological vector space
\newcommand\smbbR[1]{\raise1.6mm\hbox{\font\SweD=cmr5\SweD sm}\kern.1mm\mathbb R\kern.3mm\kern.#1mm} % use e.g. as in $\smbbR5^{3.}$ , $\smbbR{45}^{\ssmb N}$ , $\smbbR5^k$ to get R^3 , R^N , R^k considered as a real smooth manifold
\def\fbbC{\raise1.23mm\hbox{\font\SweD =cmr5\SweD f}\kern.1mm\mathbb C} % standard field of complex numbers
\def\tfbbC{\raise1.23mm\hbox{\font\SweD =cmr5\SweD tf}\kern.1mm\mathbb C} % standard topological field of complex numbers
\def\stfbbC{{\raise.5mm\hbox{\font\SweD =cmr5\SweD t\kern-.1mmf}\kern.3mm\mathbb C}} % the preceding for sub- and superscripts
\def\tfbbH{\raise1.2mm\hbox{\font\SweD =cmr5\SweD tf}\kern.3mm\mathbb H} % standard topological division ring of quaternions
\def\taubb_#1{\tau\kern-.15mm\lower.7mm\hbox{\font\SweD =msbm5\SweD #1}\kern.3mm} % tuottaa tau_{bb #1} , e.g. \taubb_R = the standard topology of the real line
\def\nsTbb_#1{\hbox{\font\SweD =eusm9\SweD T}\lower.7mm\hbox{\font\SweD =msbm5\SweD #1}\kern.3mm} % tuottaa 9 point script T_{bb #1} , e.g. \nsTbb_R = the standard topology of the real line , alternative notation for the above
\newcommand\exbbR{\kern.2mm\overline{\kern-.2mm\mathbb R\kern-.4mm}\kern.35mm} % the extended real line [-\infty,+\infty]
\newcommand\TopexR{\hbox{\font\å=eusm9\åT}\lower.27mm\hbox{$_{^{\overline{\mathbb R\kern-.4mm}}}$}\kern.388mm} % topology of the extended real line
\def\bartau_bb#1{\bar\tau\kern-.15mm\lower.7mm\hbox{\font\SweD =msbm5\SweD #1}\kern.3mm} % \bar\tau_{mathbb #1} , e.g. \bartaubb_R = the standard topology of the extended real line
\def\cinfty{\raise1.35mm\hbox{\font\SweD =cmr5\SweD c}\kern-.15mm\infty} % the complex infinity
\def\inftyyplus{{\vphantom{p_{p_p}}\infty\RHB{.5}{\fiveroman+}}} % \infty^+ for superscript, e.g. $C^{\,\inftyyplus}(O)$
 % the imaginary unit
\newcommand\iimag{{\hbox{\kern-.3mm\font\fff=cmr7\fff\char'136\kern-1.05mm\char'020\kern.07mm}}} % the imaginary unit for sub- and superscripst
\newcommand\thepi{{\raise1.4mm\hbox{$.$}\kern-.4mm\pi}} % the pi
\newcommand\tthepi{{\raise.95mm\hbox{\font\fff=cmr7\fff\char'056}\kern-.43mm\pi}} % the pi for sub- and superscripst
\newcommand\rmdss[2]{\kern1mm\kern.#1mm\roman d\kern.#2mm\,}% use e.g. as in $\int_Af\rmdss30\mu$
\newcommand\sigmalg[1]{\hbox{\font\SweD =cmmi12\SweD \char'033}\kern-.2mm\lower.77mm\hbox{\font\SweD =cmr6\SweD a}\lower.8mm\hbox{\font\SweD =cmr5\SweD l}\lower.45mm\hbox{\font\SweD =cmr5\SweD g}\kern.#1mm\kern.4mm} % sigma algebra of a topology ; use e.g. as in $\sigmalg0(\sp\nsTbb_R\!\expnota^2.]_{ti})$ , $\sigmalg5\Cal A$ , $\sigmalg5\Cal B$ , $\sigmalg0\Cal T$ , $\sigmalg5\Cal C$ , $\sigmalg7\Cal U$ , $\sigmalg4\Cal W$
\newcommand\sigmAlg[1]{\hbox{\font\SweD =cmmi12\SweD \char'033}\kern-.2mm\lower.8mm\hbox{\font\SweD =cmr5\SweD Al}\lower.45mm\hbox{\font\SweD =cmr5\SweD g}\kern.#1mm\kern.4mm} % sigma algebra of a topology ; use e.g. as in $\sigmalg0(\sp\nsTbb_R\!\expnota^2.]_{ti})$ , $\sigmalg5\Cal A$ , $\sigmalg5\Cal B$ , $\sigmalg0\Cal T$ , $\sigmalg5\Cal C$ , $\sigmalg7\Cal U$ , $\sigmalg4\Cal W$
\def\Lebmea^#1{\kern.25mm\mu_{\kern.3mm\hbox{\font\SweD =cmr5\SweD Leb}}^{\vphantom n\kern.5mm{#1}}} % \Lebmea^N , \Lebmea^{} = the complete Lebesgue measure on R^N , R , resp.
\def\Lebmef^#1{\kern.25mm\mathfrak m_{\kern.3mm\hbox{\font\SweD =cmr5\SweD Leb}}^{\vphantom n\kern.5mm{#1}}} % another notation for the above; using mathfrak m in place mu
\def\LeBmef^#1{\kern.25mm\mathfrak m_{\kern.3mm\hbox{\font\SweD =cmr5\SweD LeB}}^{\vphantom n\kern.5mm{#1}}} % the Lebesgue measure restricted on the class of Borel sets
\newcommand\openIval[1]{\null\kern.35mm]\kern.7mm#1\kern.8mm[\kern.35mm\null} % \openIval{s,t} = ] s,t [

% macros for TOPOLOGICAL VECTOR SPACEs

\newcommand\sefRC{\{\kern0.15mm\raise1.2mm\hbox{\font\SweD =cmr5\SweD f}\kern.3mm\mathbb R\,,\kern-0.3mm\raise1.23mm\hbox{\font\SweD =cmr5\SweD f}\kern.1mm\mathbb C\,\}} % = {^fR,^fC} , use e.g. $\bold K\in\sefRC$
\def\setRC{\{\kern0.15mm\raise1.2mm\hbox{\font\SweD =cmr5\SweD tf}\kern.3mm\mathbb R\,,\kern-0.3mm\raise1.23mm\hbox{\font\SweD =cmr5\SweD tf}\kern.1mm\mathbb C\,\}} % = {^{tf}R,^{tf}C} , use e.g. $\bosy K\in\setRC$
\def\Reit#1{_{\lower.2mm\hbox{\kern.2mm\hskip.#1mm\font\SweD =msbm5\SweD R\kern.15mm\font\SweD =cmr5\SweD t}\kern.15mm}} % E_{\mathbb R\roman t} = E\Reit3 = the realification of a complex topological vector space E ; use e.g. also $F\Reit0,G\Reit0,H\Reit1,\varUpsilon\Reit0,\varPi\Reit2$ , the number n puts a space of .n mm:s before the subscript
\def\Reif#1{_{\lower.2mm\hbox{\kern.#1mm\font\SweD =msbm5\SweD R\kern.2mm\font\SweD =cmr5\SweD i\kern-.3mm f}\kern.15mm}} % E_{\mathbb R\roman if} = E\Reif8 = the realification of a complex topological vector space E ; use e.g. also $F\Reif0,G\Reif0,H\Reif3,\varUpsilon\Reif0,\varPi\Reif5$ , the number n puts a space of .n mm:s before the subscript
\def\Ceit#1{_{\lower.2mm\hbox{\hskip.#1mm\font\SweD =msbm5\SweD C\kern.15mm\font\SweD =cmr5\SweD t}\kern.15mm}} % E_{\mathbb C\roman t} = E\Ceit3 = the complexification of a real topological vector space E ; use e.g. also $F\Ceit0,G\Ceit0,H\Ceit1,\varUpsilon\Ceit0,\varPi\Ceit2$ , the number n puts a space of .n mm:s before the subscript
\def\Ceif#1{_{\lower.2mm\hbox{\kern.#1dd\font\SweD =msbm5\SweD C\kern.2mm\font\SweD =cmr5\SweD i\kern-.3mm f}\kern.15mm}} % E_{\mathbb C\roman if} = E\Ceif8 = the complexification of a real topological vector space E ; use e.g. also $F\Ceif0,G\Ceif0,H\Ceif3,\varUpsilon\Ceif0,\varPi\Ceif5$ , the number n puts a space of .n dd:s before the subscript
\def\TVSps#1(#2){\roman{TVS}\kern0.6mm(\kern.#1pt\boldsymbol#2\kern0.37mm)} % the class of topological vector spaces over a topological field, use e.g. as in $\TVSps0(K)$ , $\TVSps5(\tfbbR)$
\def\LCSps#1(#2){\roman{LCS}\kern0.6mm(\kern.#1pt\boldsymbol#2\kern0.37mm)} % the class of locally convex spaces over a topological field, use e.g. as in $\LCSps0(K)$ , $\LCSps5(\tfbbR)$
\def\BaSps#1(#2){\roman{BaS}\kern0.6mm(\kern.#1pt\boldsymbol#2\kern0.37mm)} % the class of Banchable locally convex spaces over a topological field, use e.g. as in $\BaSps0(K)$ , $\BaSps5(\tfbbR)$
\def\HilbSps#1(#2){\roman H\kern.15mm\lower.7mm\hbox{\font\SweD =cmr5\SweD ilb}\kern0.4mm\roman S\kern.6mm(\kern.#1pt\boldsymbol#2\kern0.37mm)} % the class of Hilbertable locally convex spaces over a topological field, use e.g. as in $\HilbSps0(K)$ , $\HilbSps5(\tfbbR)$
\def\TVS{\roman{TVS}\kern0.4mm}%
\def\LCS{\roman{LCS}\kern0.4mm}%
\def\BaS{\roman{BaS}\kern0.4mm}%
\def\HilbLCS{\roman{H\kern.15mm\lower.7mm\hbox{\font\SweD =cmr5\SweD ilb}\kern.25mmLCS}\kern0.4mm} % \HilbLCS(K) = the class of locally convex spaces over K with topology determined by a Hilbert norm
\def\Nbh{\Cal N_{\font\SweD =cmmi6\lower.15mm\hbox{\kern.1mm\SweD bh\kern.15mm}}}% \Nhb(x,T) = set of T-neighborhoods of x
\newcommand\neiBoo{\Cal N\kern-.4mm\lower.7mm\hbox{\font\SweD=cmr7\SweD o}\kern.8mm} % set of zero neighbourhoods in a topological vector space
\newcommand\bouSet{\Cal B\kern.2mm\lower.7mm\hbox{\font\SweD=cmr7\SweD s}\kern.8mm} % set of zero neighbourhoods in a topological vector space
\newcommand\semiNor{\Cal S\kern.2mm\lower.75mm\hbox{\font\SweD=cmr5\SweD N}\kern.8mm} % set of continuous seminorms in a topological vector space
\newcommand\bouNor{\Cal B\kern.2mm\lower.75mm\hbox{\font\SweD=cmr5\SweD N}\kern.8mm} % set of bounded seminorms in a topological vector space
\newcommand\vsquotient{\lower.85mm\hbox{\kern-.6mm\font\x=cmr5\x vs\kern.85mm}} % $X\sp/\vsquotient N\aar 0$ = quotient of a module X by the submodule set N_0
\newcommand\tvsquotient{\lower.85mm\hbox{\kern-.35mm\font\x=cmr5\x tvs\kern.85mm}} % $E\ssp/\vsquotient N\aar 0$ = quotient of a topologized module E by the submodule set N_0

% the set of bounded sets in a TVS
\def\SemiNor{\Cal S_{_N}\kern0.15mm}% \SemiNor E = set of continuous seminorms in E
\def\dimHa{{\rm dim_{_{\kern.2mm Ha}}}}% the Hamel dimension of a structured vector space

% macros for various other purposes

\def\Cinfty{C\kern.6mm\raise.15mm\hbox{$^\infty$}\kern.07mm} % C^{\infty}
\def\roman#1{{\rm#1}}
\def\Univ{\hbox{\font\SweD =cmssbx10\SweD U}{}} % the class of all sets
\def\Pows{\mathcal P\kern-.7mm\lower.15mm\hbox{$_s$}\kern.3mm} % \Pows A = { U : U subseteq A }
\def\lei{      {}_{ {}^{\,\downarrow\text{\hskip-2.1mm}       }  }  \cap       }
\def\lei{\hbox{\kern.45mm$_{^\downarrow}\kern-1.280mm\cap\kern.85mm$}}
\def\leip{\hbox{\kern.6mm$_{^\downarrow}\kern-1.280mm\cap\kern1mm$}}
\def\leiss#1#2{\hbox{\kern.5mm\kern.#1mm$_{^\downarrow}\kern-1.280mm\cap\kern.9mm\kern.#2mm$}} % \leiss01 = kern.05mm \lei \kern.15mm
 % the set of integers
  % pos.  kokonaisluvut
  % ei-neg. kokonaisluvut
\def\supp{\roman{supp\kern1mm}} % \supp f = standard support of a function f
\def\vfsupp{\text{\,-\,\raise1.4mm\hbox{\font\SweD =cmr5\SweD f}supp\kern1mm}} % (\Cal T,o)\vfsupp f = (T,o)-supp f = the preceding generalized; o for 0 and dom f inc bigcup Cal T with Cal T a topology

\newcommand\inve{\kern.27mm\raise1.4mm\hbox{\font\SweD =cmr5\SweD -\kern-.28mm-}\raise1.35mm\hbox{\font\SweD =cmss5\SweD i\kern.18mm}} % relational inverse
\newcommand\invss[2]{\kern.27mm\kern.#1mm\raise1.4mm\hbox{\font\SweD =cmr5\SweD -\kern-.28mm-}\raise1.35mm\hbox{\font\SweD =cmss5\SweD i\kern.#2mm\kern.18mm}} % same as above but allows adjustment of spaces before and after; use e.g. as in $x\invss00$ , $\mu\invss11\image\sp\lbb R_+$ , $\chi\invss42\images\spp\Cal B$ , $(\ssp f\invss20\fvalue x\ssp)$ , $R\ssp\inve=R\invss40$
\newcommand\invxs[2]{\kern.27mm\kern.#1mm\raise1.4mm\hbox{\font\SweD =cmr5\SweD -\kern-.28mm-in\kern.#2mm\kern.18mm}} % alternative notation for the above
\newcommand\fvalss[2]{\hbox{\kern.3mm\kern.#1mm\font\SweD =cmr10\SweD \char'022\kern-.1mm}\kern.#2mm} % same as fvalue but allows adjustment of spaces before and after
\def\fvalue{\hbox{\kern.2mm\font\SweD =cmr10\SweD \char'022\kern-.2mm}} % f\value x=f(x)
\def\ffvalue{\hbox{\kern.2mm\font\SweD =cmr7\SweD \char'022\kern-.2mm}} % f\value x=f(x)
\def\image{\hbox{\font\SweD =cmr10\SweD \char'022\kern-1mm\char'022}} % f\image A=f[A]
\def\iimage{\hbox{\font\SweD =cmr7\SweD \kern.3mm\char'022\kern-.7mm\char'022\kern-.3mm}} % f\image A=f[A]
\def\images{\hbox{\font\SweD =cmr10\SweD \char'022\kern-1mm\char'022\kern-1mm\char'022}} % f\images\Cal A={f[A]:A in Cal A}
\def\weco{\kern.15mm\hbox{\font\SweD =cmtt10\SweD \char'054}\kern.4mm} % Weihe comma: x\weco y=(x,y)_{Weihe}
\def\cdotn{\kern-.2mm\cdot\kern-.2mm} % same as cdot but with smaller spaces before and after
\def\setminusn{\kern-.2mm\setminus\kern-.2mm} % same as setminus but with smaller spaces before and after
\def\capss#1#2{\kern.1mm\kern.#1mm\cap\kern.1mm\kern.#2mm} % \capss21 = kern.3mm \cap \kern.2mm
\def\cupss#1#2{\kern.1mm\kern.#1mm\cup\kern.1mm\kern.#2mm} % \cupss21 = kern.3mm \cup \kern.2mm
\def\cuppp{\kern0.15mm\cup\kern0.15mm} % \cup with larger spaces around
\def\timesn{\kern-.2mm\times\kern-.2mm} % same as times but with smaller spaces before and after
\def\Times{\kern.7mm\hbox{\font\SweD =cmbsy10\SweD \char'002}\kern.7mm}
\def\ttimes{\hbox{\kern-.2mm${}\times\kern-2.5mm\lower.8mm\hbox{\font\SweD =cmr5\SweD t}\kern1.8mm$}} % product topology = \rist
\def\ttimesn{\hbox{\kern-.2mm${}\times\kern-2.5mm\lower.8mm\hbox{\font\SweD =cmr5\SweD t}\kern1.4mm$}} % same as above but with smaller space after 
\def\ktimes{\hbox{\kern-.2mm${}\times\kern-2.5mm\lower1mm\hbox{\font\SweD =cmr5\SweD k}\kern1.5mm$}} % compactly generated product topology
\def\ltimes{\hbox{${}\times\kern-2.45mm\lower.8mm\hbox{\font\SweD =cmr5\SweD l}\kern1.6mm$}} % \varLambda\ltimes\varGamma = the convergence product of varLambda and varGamma
\def\vstimes{\kern.95mm\raise.45mm\hbox{\font\SweD =cmbsy6\SweD \char'002}\kern-2.3mm\lower.9mm\hbox{\font\SweD =cmr5\SweD vs}\kern1.05mm} % X\vstimes Y = the vector space product of X and Y
\def\atimes{\kern.8mm\hbox{\font\SweD =cmsy10\SweD \char'002}\kern-1.85mm\lower.65mm\hbox{\font\SweD =cmr5\SweD a}\kern1.4mm} % A\atimes B = algebra product of A and B
\newcommand\meatimes{\kern1mm\raise.5mm\hbox{\font\SweD =cmbsy5\SweD \char'012}\kern1mm} % small \otimes for product measure
  %    tulotopologia

 % tangent bundle \tanB M
\newcommand\circss[2]{\kern.1mm\kern.#1mm\circ\kern.#2mm\kern.1mm} % use e.g. as in $u\circss00 v$ to get .1mm larger spaces around `\circ' than in $u\circ v$
\newcommand\mcircss[2]{\kern.#1mm\kern.7mm{\circ}\kern-1.85mm\lower.75mm\hbox{\font\Å=cmr5\Åm}\kern.8mm\kern.#2mm} % to denote the composition of maps use e.g. as in $\tilde f\mcircss23\tilde g$ , $\tilde g\mcircss61\tilde f$ , $\tilde f\mcircss25\tilde\ell$ , $\tilde g\mcircss65\tilde\ell$ , $\tilde\ell\mcircss53\tilde u$
\def\Circ{\kern.9mm\hbox{\font\SweD =cmbsy10\SweD \char'016}\kern.9mm}
\def\cardplus{\hbox{$\kern.77mm+\kern-1.95mm\raise.23mm\hbox{$_{_{\roman c}}$}\kern1.33mm$}}%
\def\ordplus{\hbox{$\kern.78mm+\kern-1.97mm\raise.23mm\hbox{$_{_{\roman o}}$}\kern1.22mm$}}%

\def\ccdot{\hbox{$\kern.77mm\cdot\kern-1mm\raise.45mm\hbox{$_{_{\roman c}}$}\kern.38mm$}} % i\ccdot j = the cardinal product of i and j
\def\svs#1{\sbi{\fiveroman{svs\,}#1}} % for example (x+tu)\svs E = (x+tu)_{svs E} = " x+tu " in the ambiguous notation
\newcommand\vvs[1]{{_{\kern-0.1mm}}_{\hbox{\font\SweD =cmr5\SweD vs\kern.5mm}#1}} % for example (x+tu)\vvs X = (x+tu)_{vs X} = " x+tu " in the ambiguous notation in an unstructured module X

\def\Examplee{{\font\SweD =cmssi10\SweD E\kern.15mmx\kern.15mma\kern.15mmm\kern.14mmp\kern.17mml\kern.15mme}\kern.3mm. }
\def\Examples{{\font\SweD =cmssi10\SweD E\kern.15mmx\kern.15mma\kern.15mmm\kern.14mmp\kern.17mml\kern.15mme\kern.15mms}\kern.3mm. }
\def\Remarkk{{\font\SweD =cmssi10\SweD R\kern.15mme\kern.15mmm\kern.15mma\kern.15mmr\kern.15mmk\kern.15mm. }}
\def\Remarkss{{\font\SweD =cmssi10\SweD R\kern.15mme\kern.15mmm\kern.15mma\kern.15mmr\kern.15mmk\kern.15mms}\kern.3mm. }
 % former \Ne, the numbers 1,2,...
\def\No{{I\!\!N\kern-.54mm\lower.15mm\hbox{$_{\rm o}$}}} % former \Neo, the natural numbers 0,1,2,...
\def\iNo{I\!\!{N_{}}_{\kern-.22mm{\rm o}}} % \No for sub- and superscript
\def\Nopot#1{I\!\!N\kern-.54mm\lower.15mm\hbox{$_{\rm o}$}\kern-.7mm{}^{#1}} % N_o^{#1}
\def\potNo{^{\kern.37mm I\!\!{N_{}}_{\kern-.22mm{\rm o}}}} % ^{IN_o}
\def\minus{\kern.2mm\lower1.05mm\hbox{$^-$}}
\def\pplus{\raise.22mm\hbox{\font\SweD =cmr5\SweD \char'053}}% 5 point +
\def\mminus{\raise.18mm\hbox{\font\SweD =cmsy5\SweD \char'000}}% 5 point -
\def\plusinftyy{{\raise.18mm\hbox{\font\SweD =cmr5\SweD \char'053}\infty}}% +infty for sub- and superscripts with smaller +
\def\minusinftyy{{\raise.18mm\hbox{\font\SweD =cmsy5\SweD \char'000}\infty}}% -infty for sub- and superscripts with smaller -
\def\inftyy{{\raise.15mm\hbox{\font\SweD =cmsy7\SweD \char'061}}} % a little raised infty for superscript
\def\inftyyplus{\raise.15mm\hbox{\font\SweD =cmsy7\SweD \char'061}\raise.65mm\hbox{\font\SweD =cmr5\SweD +}} % a little raised infty^+ for superscript
\def\plusinfty{\lower1.05mm\hbox{$^+$}\infty}
\def\minusinfty{\lower1.05mm\hbox{$^-$}\infty}

 % using e.g. as in $\scrm9 T$ gives a 9 point scriptstyle T in math
 % using e.g. as in $\scrb9 T$ gives a 9 point bold scriptstyle T in math
\newcommand\scrmt[1]{\hbox{\font\SweD=eusm10\SweD#1}} % using e.g. as in $\scrmt T$ gives a 10 point scriptstyle T in math
 % using e.g. as in $\scrbt T$ gives a 10 point bold scriptstyle T in math
 % using e.g. as in $\scriptm9T$ gives a 9 point scriptstyle T in math
 % use e.g. scriptb9S to get a boldface 9 point script style S in math
\newcommand\vcalO{\kern-.1mm\vec{\kern.3mm\hbox{\font\sweD =cmsy7\sweD O}}\kern.2mm} % gives \vec{7 point Cal O} in math

\newcommand\conccc{\lower.3mm\hbox{${\bf{\hat{\phantom w}}}$}} % <a,b,c>\conccc<x,y,z>=<a,b,c,x,y,z>
\newcommand\ssconc[2]{\kern.#1mm%\lower.3mm
                                \hbox{${\bf{\hat{\phantom w}}}$}\kern.#2mm} % x\ssconc00 y = x\conccc y
\def\id{\kern.3mm\roman{id}\kern.7mm}
\def\idv{\hbox{\font\SweD =cmr10\SweD id}\kern.25mm\lower.7mm\hbox{\font\SweD =cmr6\SweD v}\kern.6mm} % \idv E = id_v E = id(v_s E)
\def\idm{\hbox{\font\SweD =cmr10\SweD id}\kern.25mm\lower.7mm\hbox{\font\SweD =cmr6\SweD m}\kern.6mm} % id_m M=id(Dom M)
 % Idec bCal X is the class of identities of a category bCal X

\newcommand\seqss[3]{\langle\kern.7mm\kern.#1mm#3\kern.#2mm\kern.8mm\rangle} % for example $\seqss00{fx:x\in O} = \langle fx:x\in O\rangle $ and $\seqss23{\roman k\,i:i\in\mathbb N} = \langle\kern.2mm\roman f\,i:i\in\mathbbN\kern.3mm\rangle

\def\vecs{\upsilon\kern-0.3mm\lower.15mm\hbox{$_s$}\kern0.3mm} % underlying set of a structured vector space
\def\vecss{\hbox{\font\SweD =cmitt10\SweD v}\kern-0.1mm\lower.15mm\hbox{$_s$}\kern0.2mm} % underlying set of a vector space / vector (= linear) structure
\newcommand\svecss{\hbox{\font\SweD =cmitt7\SweD v}\kern-0.1mm\lower.45mm\hbox{\font\SweD =cmmi5\SweD s}\kern0.2mm} % the above for sub- and superscripts
\def\nullv{0\lower.7mm\hbox{\font\SweD =cmr6\SweD v}\kern.6mm} % \nullv X = zero vector of the vector space X
\def\nullsv{0\lower.7mm\hbox{\font\SweD =cmr6\SweD sv}\kern.6mm} % \nullsv E = zero element in a structured module E
\def\Bnull_#1{\hbox{\font\SweD =cmssbx10\SweD 0}{_{\kern-0.1mm}}_{#1\kern.15mm}} % \Bnull_E = \nullsv E
\def\Bzero_#1{\hbox{\font\SweD =cmbx10\SweD 0}{_{\kern-0.1mm}}_{#1}} % \bzero X = zero in vector space X
\def\bnull#1{\hbox{\font\SweD =cmssbx10\SweD 0}{}_{\font\SweD =cmmi6\lower.15mm\hbox{\kern-.1mm\SweD #1\kern.15mm}}} % \bnull E = \Bnull_E = \nullsv E
\def\bnulla#1_#2{\hbox{\font\SweD =cmssbx10\SweD 0}{}_{\font\SweD =cmmi6\lower.15mm\hbox{\SweD #1\kern-.1mm}}\lower.3mm\hbox{$_{_{#2}}$}} % \bnulla E_1 = zero in vector space E_1
\def\bzero#1{\hbox{\font\SweD =cmbx10\SweD 0}{}_{\font\SweD =cmmi6\lower.15mm\hbox{\kern-.1mm\SweD #1\kern.15mm}}} % \bzero X = zero in vector space X
\def\dom{{{}^{}{\rm dom}\,{}_{{}^{}}}}
\def\domm{\kern0.15mm{\rm dom}{^{\kern.3mm\hbox{\font\SweD =cmr6\SweD 2}}}\,}
\def\domr#1{\roman{dom}^{\font\SweD =cmr6\raise.0mm\hbox{\kern.3mm\SweD #1}}}

\def\rng{{}^{}{\rm rng}\,{}_{{}^{}}}
\def\CeeFB^#1{C\kern-0.15mm\lower.85mm\hbox{\font\SweD =cmr5\SweD FB}\kern-2.4mm^{#1}\kern.1mm} % \CeeFB^k gives C_{FK}^k
\def\CeePi^#1{C\kern-0.15mm\lower.85mm\hbox{\font\SweD =cmr5\SweD \char'005}\kern-1mm^{#1}\kern.1mm} % \CeePi^k gives C_{\Pi}^k
\def\Ceece^#1{C\kern-0.15mm\lower.8mm\hbox{\font\SweD =cmr7\SweD c}\kern-.5mm^{#1}\kern.1mm} % \Ceece^k gives C_{\roman c}^k
\def\CeePii^#1#2(#3){C\kern-0.15mm\lower.85mm\hbox{\font\SweD =cmr5\SweD \char'005}\kern-1mm^{#1}\kern.1mm(\kern.#2pt\boldsymbol#3\kern0.37mm)} % use e.g. as in $\CeePii^k0(K)$ , $\CeePii^k5(\tfbbR)$ , $\CeePii^\inftyy0(K)$ , $\CeePii^\inftyy5(\tfbbR)$
\def\Ceecee^#1#2(#3){C\kern-0.15mm\lower.8mm\hbox{\font\SweD =cmr7\SweD c}\kern-.5mm^{#1}\kern.1mm(\kern.#2pt\boldsymbol#3\kern0.37mm)} % use e.g. as in $\Ceecee^k0(K)$ , $\Ceecee^k5(\tfbbR)$ , $\Ceecee^\inftyy0(K)$ , $\Ceecee^\inftyy5(\tfbbR)$
\newcommand\Cinftycee{C\kern-.1mm\lower.8mm\hbox{\font\Å=cmr7\Åc}\kern-.32mm\raise.3mm\hbox{$^\infty$}\kern.3mm} % the same as below
\def\Ccinfty{C\lower.3mm\hbox{$\kern-0.2mm_{\roman c}$}\kern-.6mm\raise.3mm\hbox{$^\infty$}\kern0.15mm} % gives C_c^{infty}
\def\CPi#1{C\kern-.2mm\lower.05mm\hbox{$_{_\Pi}$}\kern-1.52mm{}^{#1}}
\def\CinftyPi{C\kern.65mm\raise.3mm\hbox{$^\infty$}\kern-3.6mm_{_\Pi}\kern1.55mm} % gives C_\Pi^{infty}
\def\CinftyS{\Cinfty\kern-3.9mm_{_{\Cal S}}\kern1.45mm}

\def\RHB#1#2{\raise#1mm\hbox{$#2$}} % raised (by #1 mm) horizontal box of #2
\def\LHB#1#2{\lower#1mm\hbox{$#2$}} % lowered (by #1 mm) horizontal box of #2

\def\fiveroman#1{\hbox{\font\SweD =cmr5\SweD #1\kern.1mm}}

\def\sixroman#1{\hbox{\font\SweD =cmr6\SweD #1\kern.1mm}}
\def\eightmath#1{\hbox{\font\SweD =cmmi8\SweD {#1}\kern.1mm}}% 8 point math italic
\newcommand\emath[1]{\hbox{\font\SweD =cmmi8\SweD {#1}\kern.1mm}}% 8 point math italic, same as \eightmath
\newcommand\emth[2]{\lower.0#2mm\hbox{\raise.#2pt\hbox{\font\SweD =cmmi8\SweD #1}}} % \emth s0=\eightmath s , \emth s3=\eightmath s raised approx .1mm
\def\eightroman#1{\hbox{\font\SweD =cmr8\SweD {#1}\kern.1mm}}% 8 point roman
% 8 point roman for text, e.g. \erm{DF\,}--\,space 
 % use this instead of "erm" within italized text
\def\subtext#1{\raise.2mm\hbox{$_{_{\kern0.15mm\roman{#1}}}$}}
\newcommand\subText[1]{_{\lower.15mm\hbox{\font\SweD =cmr5\SweD #1}}} % $\gamma\subText{given}$ is the same as $\gamma_{given}$ but ``given'' in the 5 point roman font and a bit more lowered
\newcommand\yxbtext[3]{_{\kern.#2mm\lower.#1mm\hbox{\lower.15mm\hbox{\font\SweD =cmr5\SweD #3}}}} % \yxbtext2{55}{fun} is about the same as \subtext{fun} but .2mm:s lower and .55mm:s further to the right
\def\subtexT#1{\raise.2mm\hbox{$_{_{\kern0.15mm\hbox{\font\SweD =cmr5\SweD #1}}}$}}
\def\ssNor#1_#2{\kern.4mm\kern.#1pt\lower.32mm\hbox{$_{#2}$}} % use e.g. "\|\ssNor3_i" to get "\|_i" with .3 point extra space between `\|´ `i´
\def\sNor#1{\kern.25mm\lower.38mm\hbox{$_{#1}$}}
\def\sNorr#1{\kern-.2mm\lower.38mm\hbox{$_{#1}$}}
\def\sNoreset_#1{\kern.13mm\lower.83mm\hbox{\font\SweD =cmmi6\SweD C}\kern.32mm\lower.1mm\hbox{$_{^{\emptyset,#1}}$}}% \|y\|\sNoreset_i produces \|y\|_{C^{\emptyset,i}}
\def\norSig_#1{|\kern.23mm\lower.87mm\hbox{\font\å=cmr5\å\char'006\kern.15mm{#1}}} % use e.g. as in $|\,x\,\norSig_2$ to get |x|_{\Sigma 2} to denote the Euclidean norm of x in R^n
\def\sbi#1{{_{\kern-0.1mm}}_{#1}} % same as _#1 but a little lower
\def\ais#1_#2{{}_{\font\SweD =cmmi6\lower.15mm\hbox{\kern-.1mm\SweD #1\kern.15mm}}\lower.3mm\hbox{${_{\kern-0.3mm_{#2}}}$}} %
%%  D\ais E_2 gives D_{E_2} where E is 6 point
\def\aais#1_#2{\kern.1mm{}_{\font\SweD =cmmi6\lower.25mm\hbox{\kern-.1mm\SweD #1\kern.15mm}}\lower.4mm\hbox{${_{\kern-0.3mm_{#2}}}$}} %
\def\ai#1{{}_{\font\SweD =cmmi6\lower.15mm\hbox{\kern-.1mm\SweD #1\kern.15mm}}} % 6:n pisteen kirjainalaindeksi
\def\yi#1{^{\font\SweD =cmmi6\raise.0mm\hbox{\kern-.1mm\SweD #1\kern.15mm}}} % 6:n pisteen kirjainyl"indeksi
\def\ear#1{{}_{\font\SweD =cmr5\lower.15mm\hbox{\kern.1mm\SweD #1}}} % 5:n pisteen numeroalaindeksi
\def\ar#1{{}_{\font\SweD =cmr6\lower.15mm\hbox{\kern.1mm\SweD #1}}} % 6:n pisteen numeroalaindeksi
\newcommand\aR[1]{_{\lower.15mm\hbox{\font\SweD =cmr6\SweD #1}}} % $Y\aR 1$ is the same as $Y_1$ but `1' in the 6 point roman font and a bit more lowered
\def\aar#1{_{\font\SweD =cmr6\lower.15mm\hbox{\kern.1mm\SweD #1}}} % 6:n pisteen numeroalaindeksi
\def\aars#1_#2{{#1_{\kern-.1mm}}_{#2}} % e.g. use as in $0_{\aars F_8}$ to get $0_{F_8}$ with the 8 a little lower
\newcommand\Yr[2]{\raise1.5mm\hbox{\font\å=cmr5\å#1\kern.#2mm}}%
\def\yr#1{^{\font\SweD =cmr6\raise.0mm\hbox{\kern.3mm\SweD #1}}} % 6:n pisteen numeroyl"indeksi
\def\yrai^#1_#2{^{\kern.4mm\hbox{\font\SweD =cmr6\SweD {#1}}}_{\kern.2mm{#2}}}
\def\upparentes#1{^{\kern.2mm\raise.2mm\hbox{\font\SweD =cmr6\SweD \char'050}\kern.1mm{#1}\kern.1mm\raise.2mm\hbox{\font\SweD =cmr6\SweD \char'051}}} % e.g. x\upparentes l gives x^{(l)} with 6point parentheses
\def\lupar{\kern.2mm\lower1mm\hbox{$^{^(}$}} % pieni vasen yl"sulku
\def\rupar{\lower1mm\hbox{$^{^)}$}\kern-.15mm} % pieni oikea yl"sulku
\def\yyi#1{^{\font\SweD =cmmi6\lower.6mm\hbox{\kern-.25mm\SweD #1\kern-.05mm}}} % pieni kirjainyl"indeksi, tarkoitettu derivaatan merkint""n
\def\yyr#1{^{\font\SweD =cmr6\lower.45mm\hbox{\kern-.25mm\SweD #1\kern-.15mm}}} % pieni numeroyl"indeksi, tarkoitettu derivaatan merkint""n
\def\yplus{\lower1mm\hbox{$^{^+}$}} % + merkki edelliseen
\def\yminus{\lower1mm\hbox{$^{^-}$}} % - merkki edelliseen
\def\aminus{{\kern.15mm\raise.3mm\hbox{$_{_-}$}\kern-.1mm}}%
\def\yvee{\LHB{.9}{^{^{\,\vee}}}\kern-.3mm} % like ^\vee
\def\ywed{\LHB{.9}{^{^{\,\wedge}}}\kern-.3mm} % like ^\wedge
\newcommand\myprime{\kern.3mm\raise1.5mm\hbox{\font\SweD =cmsy5\SweD \char'060}} % to get $E'$ with smaller ' write $E\myprime$
\newcommand\myprimes{\kern.3mm\raise1.5mm\hbox{\font\SweD =cmsy5\SweD \char'060\kern-.3mm\char'060}} % to get $E''$ with smaller '' write $E\myprimes$
 % isoille kirjaimille sopiva yl"pilkku TEXma.tex
\def\adot{\kern.2mm\hbox{\font\SweD =cmb10\SweD \char'056}}%
\def\aadot{\kern.1mm\lower.1mm\hbox{\font\SweD =cmb7\SweD \char'056}}%
\def\ydot{\kern.2mm\raise1.9mm\hbox{\font\SweD =cmb10\SweD \char'056}}% k\ydot = k^. = the integer corresponding to the natural number k
\def\yydot{\kern.2mm\raise1.35mm\hbox{\font\SweD =cmb7\SweD \char'056}\kern.2mm}% the above for 7 point mode
\def\yydott{\kern.2mm\raise1.35mm\hbox{\font\SweD =cmb6\SweD \char'056}\kern.2mm}% the above for 7 point mode
\def\ClTopR#1^#2{\roman{Cl}\kern.6mm_{\hbox{\font\å=eusm6\åT}_{\mathbb R}^{#2}}\kern.#1mm} % use e.g. as in $\ClTopR4^{}\rng\sp y$ to get $Cl_{T_R} rng y$ denoting the closure of rng y in the usual topology of the real line; the "4" puts an extra space of .4 mm before "rng y"
\def\ClT{{\rm Cl}\kern.25mm\lower.4mm\hbox{$_{\Cal T}$}\kern0.2mm} % Cl_Cal T A = closure of A w.r.t. T
\def\IntT{\sp{\rm Int}\kern.2mm\lower.4mm\hbox{$_{\Cal T}$}\kern0.2mm} % Int_Cal T A = interior of A w.r.t. T
\def\Cl_taurd#1{\roman{Cl_{}}_{\kern0.37mm\hbox{\font\SweD =cmmi8\SweD \char'034}\kern-0.15mm{_{}}_{\roman{rd}}\kern0.2mm#1\,}}% Cl_{tau_{rd}#1}S = closure of S in the topology of the topologized vector space E (= #1)
\def\Int_taurd#1{\roman{Int_{}}_{\kern0.37mm\hbox{\font\SweD =cmmi8\SweD \char'034}\kern-0.15mm{_{}}_{\roman{rd}}\kern0.2mm#1\,}}% Int_{tau_{rd}#1}S = interior of S in the topology of the topologized vector space E (= #1)
\def\inc{\subseteq}

\def\exi#1{\exists\,#1\kern.2mm\,;}
\def\all#1{\forall\,#1\kern.2mm\,;}
\def\imply{\Rightarrow}

\def\sP#1 {\kern.#1mm}
\newcommand\sppp{_{\kern0.2mm}} % very very small positive, use e.g. $Y\sppp,B$ instead of $Y,B$ or $Y\spp,B$
\def\spp{\kern0.07mm} % a horizontal very small positive space
\def\sp{\kern0.15mm} % a horizontal small positive space
\def\ssp{\kern0.37mm} % a bigger positive space
\def\snn{\kern-0.2mm} % a very small horizontal negative space
\def\sn{\kern-0.3mm} % a horizontal small negative space
\def\ssn{\kern-0.63mm} % a bigger negative space
\def\biggerlineskip#1 {\linebreak\nopagebreak\vskip-4.2mm\vskip.#1mm\nopagebreak\noindent}%
\def\Biggerlineskip#1 {\linebreak\nopagebreak\vskip-4.2mm\vskip#1pt\nopagebreak\noindent}%
\def\nKP#1{$\null$\kern#1mm}
\def\nKN#1\par{$\null$\kern-#1mm} % e.g. write \nKN2.2 at the end of a paragraph to get an indentation 2.2 mm:s smaller than usually in the next paragraph
\newcommand\KPt[1]{\kern.#1mm} % \KPt* is the same as \KP{.*} when * is any of 1,2,3,4,5,6,7,8,9
\def\KP#1{\kern#1mm} % posive kern of #1 mm
\def\KN#1{\kern-#1mm} % negave kern of #1 mm
\def\nhskip#1mm{$\null$\kern#1mm}

\def\mhyppy#1{\null\kern#1mm}

\def\text#1{\hbox{\rm#1}}

\def\VBOX/#1/#2/HEREend{\vbox{#2\vskip-#1mm}\vfill\null\eject}
%% use the above to force a vertical box on one page : #1 = number , #2 = text
\def\PouN$#1${\hbox{$#1$}} % text math which is not compressed or stretched
\def\?$#1${\hbox{$#1$}} % text math which is not compressed or stretched
\def\"{\"a} \def\"{\"o}
\def\q#1{``\kern0.37mm#1\kern0.37mm"}
\def\newProCla#1\par#2\par{\vskip1.7mm\noindent\bf#1\it#2\vskip1.7mm}
\def\Prooff{{\font\SweD =cmssi10\SweD P\kern.37mmr\kern.37mmo\kern.37mmo\kern.37mmf\kern.37mm. }\rm}

\def\QED{\hfill\hbox{$\ \sqcap\kern-2.45mm\sqcup$}}

\def\noin{\noindent}
\def\Newline{\kern-10mm\newline}

\def\eps{\varepsilon}

\def\inskipline#1#2 \par{\vskip#1mm$\null$\kern-4.25mm\kern#2mm}

\def\sigrd{\sigma\kern-.2mm\lower.7mm\hbox{\font\SweD =cmr6\SweD r\font\SweD =cmr5\SweD d}\kern.6mm}
\def\ssigrd{\sigma\kern-.2mm\lower.7mm\hbox{\font\SweD =cmr6\SweD r\font\SweD =cmr5\SweD d}\kern-1.7mm\raise1.25mm\hbox{\font\SweD =cmr6\SweD 2}\kern1mm}
\def\sssigrd{\sigma\kern-.2mm\lower.7mm\hbox{\font\SweD =cmr6\SweD r\font\SweD =cmr5\SweD d}\kern-1.7mm\raise1.25mm\hbox{\font\SweD =cmr6\SweD 3}\kern1mm}
\def\sigrdu^#1{\sigma\kern-.2mm\lower.7mm\hbox{\font\SweD =cmr6\SweD r\font\SweD =cmr5\SweD d}\kern-1.7mm\raise1.25mm\hbox{\font\SweD =cmr6\SweD #1}\kern1mm}
\def\taurd{\tau\kern-.4mm\lower.7mm\hbox{\font\SweD =cmr6\SweD r\font\SweD =cmr5\SweD d}\kern.6mm}
\def\ttaurd{\tau\kern-.4mm\lower.7mm\hbox{\font\SweD =cmr6\SweD r\font\SweD =cmr5\SweD d}\kern-1.7mm\raise1.25mm\hbox{\font\SweD =cmr6\SweD 2}\kern1mm}
\def\tsigrd{\tau\sigma\kern-.2mm\lower.7mm\hbox{\font\SweD =cmr6\SweD r\font\SweD =cmr5\SweD d}\kern.6mm}
\def\tauRe{\tau{_{\kern-0.6mm}}_{\hbox{\font\SweD =cmmi5\SweD I\!\!R}}} % \tauRe = \tau_{IR} = the natural topology of the real line
\def\bartauRe{\bar\tau{_{\kern-0.6mm}}_{\hbox{\font\SweD =cmmi5\SweD I\!\!R}}} % \bartauRe = \bar\tau_{IR} = the natural topology of the extended real line
\def\tauR#1{\tau_{_{I\!\!R}}\kern-1.5mm^{#1}}
\def\RN{I\!\!R\kern.3mm^{\hbox{\font\SweD =cmmi6\SweD N}}} % Re^N
\def\QTN{Q\kern.1mm_{\lower.2mm\hbox{\font\SweD =cmmi6\SweD T}}^{\kern.2mm\hbox{\font\SweD =cmmi6\SweD N}}} % Q_T^N
\def\lemod_#1(#2){\kern.2mm\raise1.3mm\hbox{\font\SweD =cmsy5\SweD \char'024}\kern-1.65mm\lower.8mm\hbox{\font\SweD =cmr5\SweD #1}\kern.4mm(\boldsymbol{#2}\kern.37mm)} % use e.g. as in $\lemod_{TVS}(K)$ or $\lemod_{l.c.s}(\sp\tfbbR)$
\def\preLCS(#1){\kern.2mm\raise1.3mm\hbox{\font\SweD =cmsy5\SweD \char'024}\kern-1.65mm\lower.8mm\hbox{\font\SweD =cmr5\SweD LCS}\kern.4mm(\boldsymbol{#1}\kern.37mm)} % use e.g. as in $\preLCS(K)$ or $\preLCS(\sp\tfbbR)$
\def\leLCSv(#1)-{\kern.2mm\raise1.3mm\hbox{\font\SweD =cmsy5\SweD \char'024}\kern-1.65mm\lower.8mm\hbox{\font\SweD =cmr5\SweD LCS}\kern.4mm(\boldsymbol{#1}\kern.37mm)\text{\,-\kern0.15mm}} % use e.g. as in $\leLCSv(K)-\sup\,\Cal E$
\def\lleLCS(#1)-{\hbox{\font\SweD =cmsy10\SweD \char'024}\kern.3mm\lower.62mm\hbox{\font\SweD =cmr5\SweD LCS}\kern.4mm(\kern0.15mm#1\kern0.37mm)\text{\,-\kern0.15mm}} % use e.g. as in $\lleLCS(\snn\bosy K)-\sup\,\Cal H$ or $\lleLCS(\tfbbR)-\sup\,\Cal H$
\def\leLCSr{\hbox{\font\SweD =cmsy10\SweD \char'024}\kern.3mm\lower.62mm\hbox{\font\SweD =cmr5\SweD LCS}\kern.4mm}
\def\leLCS-{{\le}{}_{_{{\rm LCS}}}\text{\sp-\sp}}
\def\vspreceq{\kern1.4mm\raise1.7mm\hbox{\font\SweD =cmr5\SweD v}\kern-1.1mm\lower.12mm\hbox{\font\SweD =cmr5\SweD s}\kern-1.5mm\hbox{\font\SweD =cmsy10\SweD \char'026}\kern1mm} % $X\vspreceq Y$ means that Y is a vector substruture of X
\def\tvpreceq{\kern1.4mm\raise1.7mm\hbox{\font\SweD =cmr5\SweD v}\kern-1.2mm\lower.25mm\hbox{\font\SweD =cmr5\SweD t}\kern-1.4mm\hbox{\font\SweD =cmsy10\SweD \char'026}\kern1mm} % E tvpreceq F means the same as the earlier " E le F " for topological vector space E,F
\def\Centerline#1\par#2\par#3{\noindent#1\phantom{#3}\hfill#2\hfill\phantom{#1}#3}

% ----------------------------------------------------------------------------

\begin{document}

\title[$\text{\sc Real analyticity is shy}$]%
             {Real analyticity of composition is shy}

\author[S. Hiltunen]{Seppo\ I\. Hiltunen}
\address{Aalto University                                               \vskip0mm$\hspace{2mm}$
           Department of Mathematics and Systems Analysis               \vskip0mm$\hspace{2mm}$
           P.O.\ Box 11100                                              \vskip0mm$\hspace{2mm}$
           FI-00076 Aalto                                               \vskip0mm
         Finland}
\email{seppo.i.hiltunen\,@\,aalto.fi}

\subjclass[2010]{Primary 46T20\ssp, 46T25\ssp, 46G20\,; Secondary 46G05\ssp, 
                         46G12}

\keywords{Infinite\ssp-\sp dimensional real analyticity, holomorphy, 
  smoothness, differentiability, meager set, shy set.}

\begin{abstract} 

Dahmen and Schmeding have obtained the result that although the smooth Lie 
group \math{G} of real analytic diffeomorphisms \math{
 \mathbb S^{\,1.}\to\mathbb S^{\,1.}} has a compatible analytic manifold 
structure, it does not make \math{G} a real analytic Lie group since the group 
multiplication is not real analytic. The authors considered this result 
\q{surprising} for the applied concept of infinite\ssp-\sp dimensional real 
analyticity for maps \PouN$\ssp E\to F\spp$, defined by the property that 
locally a holomorphic extension \math{ E\RHB{.3}{\sp_{_{\bbC}}} \to 
 F\RHB{.3}{\sn_{_{\bbC}}} } exist. In this note we show that this type of 
real analyticity is quite rare for composition maps \math{ \roman f\,\varphi : 
 x\mapsto\varphi\circ x} when \math{\varphi} is real analytic. Specifically, 
we show that the smooth Fr\'echet space map \math{ \roman f\,\varphi :
 C\,(\ssbb43 R)\to C\,(\ssbb43 R)} for real \linebreak 
analytic \math{\varphi:\bbR\to\bbR} is real analytic in the above sense only 
if \math{\varphi} is the restriction to \math{\bbR} of \linebreak 
some entire function \mathss36{\bbC\to\bbC}. We also discuss the 
possibility of proving that the set of these \q{admissible} functions \math{
\varphi} be \q{small} in the space \math{A\,(\ssbb43 R)} of real analytic 
functions either in the Baire categorical sense, or in the measure theoretic 
  sense of shyness.
  \end{abstract}

\maketitle

% ----------------------------------------------------------------------------
% ----------------------------------------------------------------------------

\noin For maps \math{f:E\supseteq U\to F}, understood as triplets \math{
\tilde f=(\sp E\ssp,\sp F\spp,\sp f\ssp)} with \math{U=\dom f}, where \math{E} 
and \math{F} are real Hausdorff locally convex spaces and \math{f} is a 
function between the underlying sets, there are several possibilities to 
reasonably define real analyticity of \math{\tilde f}. One is that of the 
\q{convenient calculus} developed in \cite[p.\ 97\,ff.]{KM}\,. Another 
possibility is to represent \math{f} locally in some sense as a limit of 
partial sums of \q{power series}. A third possibility is to require locally 
existence of some \q{holomorphic} extension \math{E\ar{cx}\to F\ar{cx}} 
between the complexifications, cf.\ \cite[pp.\ 51\,--\,52]{Glö}\,. This third 
approach further divides into several possibilities according to what kind of 
concept of holomorphy one chooses to use, cf.\ \cite{BoSi} and \cite{Piz}\,.

In this note, we shall use that third approach with holomorphy defined as 
meaning being $\Cinfty$ between complex Hausdorff locally convex spaces in the 
sense of \cite{HiSM} with topological vector spaces being interpreted as 
convergence vector spaces as explained there on page 236\ssp. We let \math{
\CeePi^\inftyy(\sp\tfbbC\ssp)} denote the class of thus obtained holomorphic 
maps \math{\tilde f}. As explained in \cite[Remarks 0.12\ssp, p.\ 241]{HiSM} 
for real scalars, noting that taking \math{\bbC} in place of \math{\bbR} in 
the required proofs does not change anything essential, our concept of 
holomorphy is precisely the same as that in \cite[p.\ 23]{DS}\,. Hence also 
our associated real analyticity is precisely the same as there. See further 
\cite[Theorem 3.8\ssp, pp.\ 14\ssp, 18]{HiDim} for the case where \math{F} is 
  Mackey complete.

Let us say that \math{\tilde f} is {\it conveniently real analytic\ssp} in the 
case where \PouN$\ssp f:\dom f\to F$ is real analytic in the sense of 
\cite[Definition 10.3\ssp, p.\ 102]{KM}\,.

For \math{E} in the class \math{\LCSps5(\tfbbR)} of all real Hausdorff locally 
convex spaces, we let \math{E\ar{cx}} denote the {\it complexification\ssp} as 
explained in \cite[A\,2\ssp, p.\ 23]{DS}\,. Hence then \PouN$ E\ar{cx} \in 
 \LCSps5(\tfbbC) \ssp$ holds, and for the underlying sets we have \mathss37{
\vecs(\sp E\ar{cx}\sp)=\vecs E\sp\times\vecs E}. Here \math{\tfbbR} and \math{
\tfbbC} are the real and complex topological fields whose underlying sets are \math{
\bbR} and \mathss38{\bbC}, \,respectively.

We will consider the maps \mathss37{ \roman F\,\varphi = 
 (\sp E\ssp,\sp E\ssp,\sp\roman f\,\varphi\ssp) }, \,where \math{ E = 
 C\,(\ssbb43 R)} is the real Fr\'echet space of continuous functions \math{
\bbR\to\bbR} with topology that of uniform con- \linebreak 
                                                vergence on bounded intervals, 
and \math{\roman f\,\varphi=\seqss33{\varphi\circ x:x\in\vecs E} } with \math{
\varphi:\bbR\to\bbR} real \linebreak 
                          analytic. Thus \math{\roman f\,\varphi} is the 
function \math{\vecs E\to\vecs E} defined by \mathss35{ 
  x \mapsto \varphi\circ x }.

From our Theorem \ref{Tsmoo} below it follows that \math{\roman F\,\varphi} is 
smooth in all reasonable senses, and also conveniently real analytic. Contrary 
to this, by Theorem \ref{mainTh} it is real analytic in our sense only if \math{
\varphi} has an entire extension.

Below, we let \math{f\sp\fvalue x} be the function value of \math{f} at \math{
x} instead of the usual \q{$f\ssp(\sp x\sp)$}. The zero vector of a 
topological (\sp or any structured\,) vector space \math{F} is \mathss38{
\Bnull_F}. In \linebreak 
              particular, for our fixed \math{E} above we have \mathss38{
\Bnull_E=\bbR\times\snn\{\ssp 0\ssp\} }. We let \math{\Univ} be the class of 
all sets, and for functions \math{f} and \math{g} we have \math{
[\KP1 f\sp,\sp g\rbrakf} the function defined on \linebreak \PouN$
                                                  \dom f\capss21\dom g\ssp$ by \mathss37{
x\mapsto(\ssp f\sp\fvalue x\ssp,\sp g\fvalue x\ssp) }. We put \mathss38{
\roman{pr}\ar 1 = \{\,(\ssp x\ssp,\sp y\ssp,\sp x\ssp) : 
 x\ssp,\sp y\in\Univ\KP1\} }, \,the global \q{first projection}, and \mathss38{
(\ssp z\,;\sp x\ssp,\sp y\ssp) = (\ssp z\ssp,\sp(\ssp x\ssp,\sp y\ssp)) }. If \math{
z = (\ssp x\ssp,\sp y\ssp) } is an ordered pair, then \math{x=\sigrd z} and \mathss36{
y=\taurd z }. We further refer to \cite[pp.\ 4\,--\,8]{HiDim}\,, 
\cite[pp.\ 4\,--\,9]{SeBGN} and \cite[p.\ 1]{FKBGN} for a more extensive 
  explanation of our notational system.

\begin{theorem}\label{Tsmoo}

If \PouN$\,\varphi:\bbR\to\bbR$ is smooth{\ssp\rm, }then \PouN$\,
\roman F\,\varphi\in\CinftyPi(\sp\tfbbR\ssp)$ holds. If \PouN$\,
\varphi:\bbR\to\bbR$ is real analytic{\ssp\rm, }then $\,\roman F\,\varphi$ is 
  conveniently real analytic. 
  \end{theorem}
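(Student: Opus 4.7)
For the first claim, the plan is to verify by direct computation the defining conditions of $\CinftyPi(\sp\tfbbR\ssp)$ from \cite{HiSM}. That $\roman f\,\varphi$ sends $E=C\,(\ssbb43 R)$ continuously into itself is the standard continuity of the Nemytski operator $x\mapsto\psi\circ x$ on $E$ for continuous $\psi:\bbR\to\bbR$, which reduces to uniform continuity of $\psi$ on each compact subinterval. By induction on $k$, using Taylor's theorem for $\varphi$ pointwise and passing difference quotients through the topology of uniform convergence on bounded intervals, one obtains all iterated directional derivatives in the explicit form
\begin{equation*}
D^{k}(\roman f\,\varphi)(x)\,[\,h_{1},\dots,h_{k}\,]\;=\;(\varphi^{(k)}\circ x)\cdot h_{1}\cdots h_{k},
\end{equation*}
with the product understood pointwise on $\bbR$. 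Each such $k$-linear form is jointly continuous in $(x,h_{1},\dots,h_{k})\in E^{k+1}$, since $x\mapsto\varphi^{(k)}\circ x$ is again Nemytski-continuous and pointwise multiplication is jointly continuous on $E\timesn E$. This yields $\roman F\,\varphi\in\CinftyPi(\sp\tfbbR\ssp)$ and, in particular, convenient smoothness in the sense of \cite{KM}.

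For the second claim, I invoke the characterization from \cite[Ch.~10--11]{KM}: $\roman F\,\varphi$ is conveniently real analytic iff it is conveniently smooth (already shown) and $\roman f\,\varphi$ sends every conveniently real analytic curve $c:\bbR\to E$ to a conveniently real analytic curve. Fix such $c$ and $t_{0}\in\bbR$. Since $E$ is Fr\'echet, the local power-series characterization of real analytic curves yields $c(t)=\sum_{n\geq 0}(t-t_{0})^{n}x_{n}$ converging in $E$ for $|t-t_{0}|<r$ with some $r>0$ and $(x_{n})\subset E$. For each $N\in\bbN$, Cauchy--Hadamard then gives uniform convergence of $\sum_{n}x_{n}(s)\,(z-t_{0})^{n}$ on the compact $\overline{D(t_{0},R)}\times[-N,N]$ for any $R<r$, defining a continuous $\tilde c:D(t_{0},R)\times[-N,N]\to\bbC$ that is holomorphic in $z$.

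Now $K_{N}=\{c(t)(s):t\in[t_{0}-R,t_{0}+R],\,s\in[-N,N]\}$ is a compact subset of $\bbR$, and by real analyticity of $\varphi$ there exist $\delta_{N}>0$ and a holomorphic extension $\tilde\varphi$ of $\varphi$ to the $\delta_{N}$-neighborhood $U_{N}$ of $K_{N}$ in $\bbC$. Shrinking $R$ to some $R_{N}>0$ ensures $\tilde c(z,s)\in U_{N}$ on $D(t_{0},R_{N})\times[-N,N]$, whereupon $(z,s)\mapsto\tilde\varphi(\tilde c(z,s))$ is continuous there and holomorphic in $z$. For any signed Radon measure $\mu$ on $\bbR$ with $\supp\mu\subset[-N,N]$---and such measures exhaust the continuous dual $E'$ as $N$ varies---the map $z\mapsto\int\tilde\varphi(\tilde c(z,s))\,d\mu(s)$ is therefore holomorphic on $D(t_{0},R_{N})$ and extends $t\mapsto\mu(\roman f\,\varphi(c(t)))$ from the reals. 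Hence every $\mu\circ\roman f\,\varphi\circ c$ is real analytic, so $\roman f\,\varphi\circ c$ is itself real analytic into $E$ in the convenient sense. The main subtlety, and the obstacle to any stronger conclusion, is that $R_{N}\to 0$ is generally unavoidable: the composition need not admit any single holomorphic extension into the complexification $E\ar{cx}$, which is precisely the gap the stronger $\CeePi^\inftyy(\sp\tfbbC\ssp)$-based notion detects and which Theorem \ref{mainTh} exploits.
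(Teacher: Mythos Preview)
Your first paragraph is fine and in fact more explicit than the paper, which simply invokes \cite[Theorem~3.6]{HiDim} for the smoothness assertion; your direct verification of the derivative formula $D^{k}(\roman f\,\varphi)(x)[h_{1},\dots,h_{k}]=(\varphi^{(k)}\circ x)\cdot h_{1}\cdots h_{k}$ and of joint continuity is a perfectly good alternative.

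The second part, however, has a genuine gap. You assert that a conveniently real analytic curve $c:\bbR\to E=C(\bbR)$ admits a local power-series expansion converging in $E$ with a single radius $r>0$ independent of the seminorm. This is false. Take $c(t)(s)=1/(1+t^{2}s^{2})$. For each $\ell\in E'$, realized as integration against a measure supported in some $[-N,N]$, the function $\ell\circ c$ extends holomorphically to the disc $|t|<1/N$, so $c$ is conveniently real analytic (and smooth). Yet its formal Taylor series at $t_{0}=0$ is $\sum_{k}(-1)^{k}s^{2k}t^{2k}$, which converges uniformly on $[-N,N]$ only for $|t|<1/N$; hence it converges in $E$ for no $t\neq 0$. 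Your construction of $\tilde c$ on $D(t_{0},R)\times[-N,N]$ with an $N$-independent $R<r$ therefore collapses at the outset, before you ever shrink to $R_{N}$.

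The paper sidesteps this entirely by invoking \cite[Theorem~10.4]{KM}: it suffices to check, for each $\ell\in E'$ and each affine line $t\mapsto x+tu$, that $t\mapsto\ell(\roman f\,\varphi(x+tu))$ is real analytic near $0$. Along a line the extension to complex $t$ is trivially $\zeta\mapsto x+\zeta u$, and the remainder of the argument---Riesz representation of $\ell$ as integration against a compactly supported signed measure, compactness of $x[I]$ and $u[I]$ to land in $\dom\bar\varphi$, then Morera plus Fubini---is essentially what you wrote for your $R_{N}$ step. Your route is salvageable if you abandon the global $r$ and argue seminorm by seminorm from the start, but the clean fix is simply to test on lines as the paper does.
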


\begin{proof} The first assertion follows from 
\cite[Theorem 3.6\ssp, p.\ 17]{HiDim} similarly as (a) in Remarks 3.7 there. 
For the second assertion, assuming the premise, by 
\cite[Theorem 10.4\ssp, p.\ 102]{KM} for any continuous linear functional \math{
\ell:E\to\tfbbR} and for arbitrarily fixed \math{x\ssp,\sp u\in\vecs E} it 
suffices to show that the real function \PouN$\ssp\chi:\bbR\to\bbR$ \linebreak 
given by \math{ t \mapsto \ell\circss01\roman f\,\varphi\fvalue(\ssp 
 x + t\,u\ssp) } is real analytic on some open interval around zero. This in 
turn follows if we show that it has a holomorphic extension \math{\bar\chi} 
around zero in some open set of the complex plane.

Now, by the Riesz representation \cite[Theorem 7.4.1\ssp, p.\ 186]{Du} in 
conjunction with 
\cite[Corollary\ssp, p.\ 262\ssp, Proposition 3.14.1\ssp, p.\ 266]{Ho}\,, 
there are a compact interval \linebreak 
                        \PouN$I\inc\bbR\ssp$ and a bounded regular signed 
Borel measure \math{\mu} on \math{I} with the property that \PouN$
\ell\fvalss40 y = \int_{\KP1 I\,}y\rmdss11\mu\ssp$ holds for all \mathss35{
y\in\vecs E}. Since \math{\varphi} has a holomorphic extension $\ssp
\bar\varphi$ \linebreak
             defined on some open set in the complex plane containing \mathss38{
\bbR}, \,and since \math{x\ssp\image\sn I} and \linebreak 
                                               $u\ssp\image\sn I\ssp$ are 
compact, there is \math{\eps\in\rbb R^+} such that for \vskip.5mm\centerline{$
\Omega =\{\,t + \roman i\KPt8\sigma : \minus 1 < t < 1\ssp\text{ and }\ssp
                                    \minus\eps < \sigma < \eps\,\} $} \inskipline{0.5}0

we have \math{x\fvalue s + \zeta\KP1(\ssp u\fvalue s\ssp) \in \dom\bar\varphi} 
for all \math{s\in I} and \mathss35{\zeta\in\Omega}. Then defining \PouN$\ssp
\bar\chi:\Omega\to\bbC$ by \mathss38{ \zeta \mapsto
 \int_{\KP1 I\,}\bar\varphi\fvalss10(\ssp 
 x\fvalue s + \zeta\KP1(\ssp u\fvalue s\ssp)) \rmdss11\mu\,(\sp s\sp) }, \,we 
have \math{\bar\chi} continuous with \mathss36{\bar\chi\KP1|\KP1\bbR\inc\chi}, \,%
and hence we are done if we show that \math{\bar\chi} is holomorphic.

Letting \math{\Gamma} be the positively oriented boundary of an arbitrarily 
fixed closed triangle included in \mathss36{\Omega}, \,by Morera's theorem it 
suffices to show that \PouN$\ssp \oint_{\KP1\Gamma\,}\bar\chi = 0$ \linebreak 
holds. Now applying Fubini's theorem separately to the positive and negative 
part of \math{\mu} in its Jordan decomposition, we obtain \inskipline1{25.5}

$ \oint_{\KP1\Gamma\,}\bar\chi 
 = \oint_{\KP1\Gamma\sp}\int_{\KP1 I\,}
    \bar\varphi\fvalss10(\ssp x\fvalue s + \zeta\KP1(\ssp u\fvalue s\ssp))
     \rmdss11\mu\,(\sp s\sp)\rmdss21\zeta 
$ \inskipline{.5}{33.3}

${} = \int_{\KP1 I\sp}\oint_{\KP1\Gamma\,}
       \bar\varphi\fvalss10(\ssp x\fvalue s + \zeta\KP1(\ssp u\fvalue s\ssp))
        \rmdss11\zeta\rmdss21\mu\,(\sp s\sp) = 0 \KP1$.
  \end{proof}

\begin{theorem}\label{mainTh}

If \PouN$\,\varphi:\bbR\to\bbR$ is real analytic with $\,\roman F\,\varphi$ 
real analytic{\ssp\rm, }then there is a holomorphic $\,\chi:\bbC\to\bbC$ with $\,
  \varphi=\chi\KP1|\KP1\bbR \KPt8$.
  \end{theorem}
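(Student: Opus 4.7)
The plan is to use the assumed local holomorphic extension of $\roman F\,\varphi$ around $\Bnull_E\in\vecs E$ to produce holomorphic extensions of $\varphi$ on disks of arbitrarily large radius about $0\in\bbC$, and then to patch these via the identity theorem into a single entire $\chi:\bbC\to\bbC$ with $\chi\KP1|\KP1\bbR=\varphi$.

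By hypothesis, there are an open $W\ni\Bnull_{E\ar{cx}}$ in $E\ar{cx}$ and a holomorphic $\Phi=(\sp E\ar{cx}\ssp,\sp E\ar{cx}\ssp,\sp\phi\ssp)\in\CeePi^\inftyy(\sp\tfbbC\ssp)$ such that $\phi\KP1|\KP1(\sp W\capss21\vecs E\sp)=\roman f\,\varphi\KP1|\KP1(\sp W\capss21\vecs E\sp)$. Identifying $\vecs(E\ar{cx})=\vecs E\times\vecs E$ with the space of continuous maps $\bbR\to\bbC$ in the usual way and using that the topology on $E=C\,(\ssbb43 R)$ is that of uniform convergence on compact intervals, we can choose $N\ge 1$ and $\eps\in\rbb R^+$ so that $W$ contains the basic $0$-neighborhood
\[
U_{N,\eps}\ssp=\ssp\{\,y\in\vecs(E\ar{cx}) : \sup\{\,|\,y\fvalue s\,| : s\in\bbR\ssp,\ |s|\le N\,\}<\eps\,\}\KP1.
\]

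The main construction is as follows. Fix any $t_0\in\bbR$ with $t_0\ne 0$. The $\bbC$-linear map $\iota:\tfbbC\to E\ar{cx}$ sending $\zeta$ to the function $s\mapsto\zeta s$ is continuous, hence lies in $\CeePi^\inftyy(\sp\tfbbC\ssp)$, and for $|\zeta|<\eps/N$ and $|s|\le N$ we have $|\zeta s|<\eps$, so $\iota\fvalue\zeta\in U_{N,\eps}\inc W$. The evaluation $\roman{ev}_{t_0}:E\ar{cx}\to\tfbbC$, $y\mapsto y\fvalue t_0$, is also continuous linear and hence holomorphic. Composing these with $\Phi$ produces a holomorphic $h:D_{\eps/N}\to\bbC$, $\zeta\mapsto\phi\fvalue(\iota\fvalue\zeta)\fvalue t_0$, where $D_{\eps/N}=\{\,\zeta\in\bbC:|\zeta|<\eps/N\,\}$. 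For real $\zeta\in D_{\eps/N}$, the function $\iota\fvalue\zeta$ is real-valued, so the extension property gives $\phi\fvalue(\iota\fvalue\zeta)=\varphi\circ(\iota\fvalue\zeta)$ and thus $h(\zeta)=\varphi(\zeta t_0)$. The substitution $\eta=\zeta t_0$ then yields a holomorphic $\bar\varphi_{t_0}:D_{\eps|t_0|/N}\to\bbC$ agreeing with $\varphi$ on $D_{\eps|t_0|/N}\cap\bbR$.

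Because $|t_0|$ is unrestricted, the radii $\eps|t_0|/N$ are unbounded, and by the identity theorem any two $\bar\varphi_{t_0}$ coincide on the intersection of their domains. They therefore assemble into the required entire $\chi:\bbC\to\bbC$ with $\chi\KP1|\KP1\bbR=\varphi$. The main technical nuisance will be to verify that the elementary continuous $\bbC$-linear maps $\iota$ and $\roman{ev}_{t_0}$ lie in $\CeePi^\inftyy(\sp\tfbbC\ssp)$ in the convergence-vector-space sense of \cite{HiSM}, which is routine since continuous linear maps between $\LCSps5(\tfbbC)$ objects are always holomorphic there.
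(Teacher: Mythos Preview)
Your argument is correct and shares with the paper the essential observation that a basic $0$-neighborhood in $E_{\rm cx}$ constrains values only on some compact interval $[-N,N]$, so one can feed in functions that are small there but reach arbitrary real values elsewhere. The execution, however, differs. You take $\iota(\zeta)(s)=\zeta s$, which does not vanish on $[-N,N]$; hence $\iota(\zeta)\in W$ only for $|\zeta|<\eps/N$, and you recover large disks by the rescaling $\eta=\zeta t_0$ with the evaluation point $t_0$ pushed to infinity, finishing with the identity theorem to patch the family $\bar\varphi_{t_0}$. The paper instead fixes a single $v\in\vecs E$ that is identically zero on $[-n_0,n_0]$ (namely $v(s)=\max\{s-n_0,0\}$), so that the entire complex line $\{\zeta\,(v,\Bnull_E):\zeta\in\bbC\}$ lies inside $W$; composing the holomorphic extension with evaluation at the single point $m_0=n_0+1$, where $v(m_0)=1$, then yields the entire $\chi$ directly, with no patching step. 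Your route is perhaps the more naive one; the paper's is slicker in that it produces the entire extension in a single shot and avoids invoking the identity theorem.
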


\begin{proof} Assuming the premise, with \math{E=C\,(\ssbb43 R)} as above, let \math{
G=E\ar{cx}} and \PouN$\ssp y = $ \PouN$
                               \bbR\times\snn\{\,\varphi\fvalss01 0\,\} \,$. 
Now having \mathss36{(\ssp\Bnull_E\ssp,\sp y\ssp)\in\roman f\,\varphi }, \,%
some \math{g} exists such that \PouN$\ssp 
(\ssp\Bnull_G\ssp;\ssp y\ssp,\sp\Bnull_E\spp) \in g $ and \math{
(\ssp G\sp,\sp G\sp,\sp g\ssp) \in \CeePi^\inftyy(\sp\tfbbC\ssp) } and \math{
 g\KP1|\KP1(\ssp\Univ\times\snn\{\,\Bnull_E\}\spp) \inc 
 [\KP{1.2}\roman f\,\varphi\circss01\roman{pr}\ar 1 \ssp , \sp
   \Univ\times\snn\{\,\Bnull_E\}\rbrakf } hold. \newline
                                                Having \math{ \Bnull_G \in 
\dom g\in\taurd G}, \,there is \math{n\ar 0\in\rbb Z^+} such that for \vskip.5mm\centerline{$
W=\vecs G\capss21\{\,z:\all s\,\minus n\ar 0\le s \le
n\ar 0\imply|\KP1\sigrd z\fvalue s\KP1| + |\KP1\taurd z\fvalue s\KP1| <
n\ar 0\KN1^{\mminus 1}\ssp\big\} $} \inskipline{.5}0 

we have \mathss36{W\inc\dom g}. With 
\math{m\ar 0 = n\ar 0+1} and \math{
 J = {\ssp]\sp}\,n\ar 0\ssp,\plusinfty\,{\sp[\ } } taking \vskip.5mm\centerline{$
 v = (\ssbb30 R\setminusn J\ssp)\times\snn\{\ssp 0\ssp\} \cupss00
       \seqss33{s - n\ar 0\sn : s\in J} \,$ and $\, w = 
     (\sp v\ssp,\Bnull_E) \,$ and} \inskipline{.5}0

\?$\ell = \seqss33{\sigrd z\fvalue m\ar 0 + \sp\roman i\sp\,(\sp
       \taurd z\fvalue m\ar 0):z\in\vecs G} \,$, we obtain \math{
\{\,\zeta\,w:\zeta\in\ssbb02 C\,\}\inc W} with $\ell$ a continuous linear map 
\mathss38{
G\to\tfbbC}. Hence for \math{\chi =
\seqss43{\ell\circss 21 g\fvalue(\ssp\zeta\,w\ssp):\zeta\in\bbC} } we 
have \math{
\chi:\bbC\to\ssbb03 C} holomorphic. In addition, 
for \math{t\in\bbR} we obtain \vskip.5mm \centerline{$
 \chi\fvalss01 t = \ell\circss10 g\fvalue(\ssp t\,v\ssp,\sp\Bnull_E\spp)
  = \ell\circss11[\KP{1.2}\roman f\,\varphi\circss11\roman{pr}\ar 1\ssp,\sp
   \Univ\times\snn\{\,\Bnull_E\}\rbrakf\KN{.5}\fvalue
 (\ssp t\,v\ssp,\sp\Bnull_E\spp) $} \inskipline{.5}{22.7}

${}
 = \ell\fvalss20(\,\roman f\,\varphi\fvalue(\ssp t\,v\ssp)\ssp,\sp\Bnull_E\spp)
 = \roman f\,\varphi\fvalue(\ssp t\,v\ssp)\fvalue m\ar 0
 = \varphi\circ(\ssp t\,v\ssp)\fvalue m\ar 0$ \inskipline{.5}{22.7}

${} 
 = \varphi\fvalue(\,t\KP1(\ssp v\fvalue m\ar 0\sp)) 
 = \varphi\fvalue(\ssp t\,1\ssp)) 
 = \varphi\fvalue t \,$.
  \end{proof}

The argument of the above proof of Theorem \ref{mainTh} {\it does not\ssp} 
work if instead we take the Fr\'echet space \mathss38{ E = 
 \Cinfty\big(\sp[\KP1 0\,,\sp 1\KPt9]\sp\big) }. However, it is obvious that 
the same idea can be used to prove similar results for spaces \math{ E = 
 \Cinfty(\sp\Omega\sp)} when \math{\Omega} is a nonempty open set in some 
\q{nonzero} Euclidean space.

\begin{remark}\label{Remrare}

Letting \math{\Omega} be the set of all real analytic \mathss38{x:\bbR\to\bbR
}, \,and \math{S} its subset formed by the \math{x} possessing an entire 
extension, if one wants to consider whether \linebreak
                                            $S\ssp$ be \q{small} in \math{
\Omega} in some precise sense, one must put some structure on \mathss36{\Omega
}. A standard procedure is to construct the locally convex space \PouN$\ssp 
A\,(\ssbb43 R)=(\sp X\sp,\sp\scrmt T\KP1)=F$ \linebreak
                                             with \math{\vecs F=\Omega} as 
follows. Let \math{X} be the (\sp abstract\ssp) real vector space with 
underlying set \math{\Omega} obtained by taking the \q{obvious} pointwise 
operations, and let \math{\scrmt T} be the strongest locally convex topology 
for \math{X} such that the identity is a continuous linear map \math{
\roman F\,U\to F} for all \mathss37{U\in\scrmt U}, \,when \math{\scrmt U} is 
the set of all open \math{U} in \math{\bbC} with \linebreak
                                            \PouN$\bbR\inc U\spp$, \,and \math{
\roman F\,U} is the \q{obvious} Fr\'echet space of functions \math{x\in\Omega} 
possessing a holomorphic extension \mathss38{U\to\bbC}.

Letting \math{\scrmt B} be the Borel $^\sigma\sn$algebra of the topological 
space \mathss38{(\ssp\Omega\,,\sp\scrmt T\KP1)}, \,now to the above smallness 
one can give a precise mathematical content in one of the following two 
different ways: \inskipline{.5}2

(1) \ in the topological Baire categorical sense, here meaning that \math{S} 
    is \mathss37{\scrmt T}{--\,\it meager\ssp} \linebreak 
in the sense that it can expressed as a countable union of \q{rare} sets, i.e.\ 
those with closure having no interior points. \inskipline{.2}2

(2) \ in the measure theoretic sense, here meaning that \math{S} is \mathss37{
    F}{--\,\it shy\ssp} in the sense that it is contained in a countable union 
of sets \math{B\in\scrmt B} having the property that there is a 
finite\ssp-\sp dimensional subspace \math{M} in \math{X} such that \math{
B\capss21\{\,x+v:v\in M\KP1\} } for \linebreak 
all \math{x\in B} has Lebesgue measure zero in the \q{obvious} sense. \vskip.5mm

Using the result from \cite{Is} that \math{S\in\scrmt B} holds, we can 
establish (2) quite easily. Namely, with any \math{u\in\vecs F\sp\setminus S} 
taking \math{M=\{\,t\,u:t\in\bbR\KP1\} } then as a singleton in a 
one\ssp-\sp dimensional subspace \math{ S\capss21\{\,x+v:v\in M\KP1\} = 
 \{\ssp x\ssp\} } has measure zero. So by Theorem \ref{mainTh} the set \math{
\vecs A\,(\ssbb43 R)\capss21\{\,\varphi:\roman F\,\varphi\ssp\text{ is real 
 analytic }\} } is \mathss37{A\,(\ssbb43 R)}--\,shy.

If instead of \math{A\,(\ssbb43 R)} we had for example \mathss38{ F = 
 A\,\big(\sp[\KP1 0\,,\sp 1\KPt9]\sp\big) }, \,which is a Silva space, an 
inductive limit of a sequence of Banach spaces with compact links, then we 
could easily establish (1) by noting that an elementary complex analysis 
argument using Cauchy's formula shows that \math{S} is contained in a 
countable union of \mathss37{\scrmt T}--\,compact, and hence rare sets. Since 
the sets \math{U\in\scrmt U} in our actual situation are unbounded, this 
argument is not applicable. We can only show that \math{S} can be expressed as 
an {\it uncountable\ssp} union of \mathss37{\taurd\roman F\,U}--\,compact, and 
hence \mathss37{\scrmt T}--\,compact sets. So the question whether (1) holds 
in the above situation remains open.
  \end{remark}

\begin{remark}

When defining our concept of a set being \q{shy} in a topological vector 
space, we above deviated e.g.\ from the approaches in \cite{Chris} and 
\cite{Hunt} since we wish to apply the concept to \q{highly nonmetrizable} 
spaces contrary to the cases {\it loc.\ cit.}\ where the underlying topology 
is assumed to be Polish, i.e.\ separable and completely metrizable. 
Specifically, we explicitly required the set to be contained in a countable 
union of \q{negligible} Borel sets since otherwise it might happen that a 
countable union of shy sets is not shy. In the restricted case of Polish 
topologies, one is able to give a quite nontrivial proof that a countable 
union of negligible Borel sets also is such. See 
  \cite[pp.\ 223\,--\,224]{Hunt} for the details.

Note further that for example for \mathss36{ F = 
 \tfbbR\KPt8^{\sbbNo\sp)}\RHB{.62}{\sp\fiveroman{lcx}} }, \,the countable 
direct sum of the topological field \mathss38{\tfbbR}, \,we trivially have (1) 
that \math{\vecs F} is \mathss37{\taurd F}--\,meager, and (2) that \math{
\vecs F} is \mathss37{F}--\,shy. So an infinite\ssp-\sp dimensional locally 
convex space can be both meager and shy \q{in itself} in the sense we defined 
in Remark \ref{Remrare} above. Another example is the Silva space \mathss38{ F 
 = A\,\big(\sp[\KP1 0\,,\sp 1\KPt9]\sp\big) }. In these cases \math{
(\ssp\vecs F\spp,\sp\taurd F\ssp)} is not a Baire topological space, and being 
\q{shy} in \math{F} is not even defined in the sense of \cite{Hunt} since \math{
\taurd F} is not a metrizable topology.
  \end{remark}

If one wished to define shyness and its complement \q{prevalence} more 
carefully and generally, the generated $^\sigma\sn$algebra of \math{\scrmt A} 
being defined by \vskip.5mm\centerline{$
\sigmAlg5\scrmt A = \bigcap\,\{\,\scrmt B:\scrmt B\ssp\text{ is a $^\sigma\sn
 $algebra and }\ssp \scrmt A \inc \scrmt B\KP1\} \KP1$,} \inskipline{.5}0

one could put the following \q{semiformal}

\begin{definitions}\label{dfShy}

(1) \ Say that \math{G} is a {\it topologized group\ssp} if{}f there are \math{
    g\,,\sp\Omega\,,\sp\scrmt T} such that \PouN$G=(\ssp g\,,\sp\scrmt T\KP1) \ssp$ 
and \math{(\ssp\Omega\,,\sp\scrmt T\KP1)} is a topological Hausdorff space and \math{
g} is a group operation on \mathss36{\Omega}, \,and for all \math{x\in\Omega} 
  it holds that \vskip.2mm\centerline{$
\seqss33{g\fvalue(\ssp x\ssp,\sp y\ssp):y\in\Omega} \ssp$ and \math{
\seqss33{g\fvalue(\ssp y\ssp,\sp x\ssp):y\in\Omega} }     are continuous \mathss34{
 \scrmt T\to\scrmt T }.} \inskipline{.5}2

(2) \ Say that \math{S} is {\it shy\ssp} in \math{G} if{}f \math{G} is a 
    topologized group and for all \math{g\,,\sp\Omega\,,\sp\scrmt T} from \inskipline09

\PouN$G=(\ssp g\,,\sp\scrmt T\KP1)\ssp$ and \math{\Omega=\bigcup\,\scrmt T} it 
follows existence of a countable \PouN$\ssp\scrmt A\inc\sigmAlg2\scrmt T$ \linebreak
with \mathss38{S\inc\bigcup\,\scrmt A }, \,and such that for every \math{A\in
 \scrmt A} there are some \mathss38{\scrmt T}--\,compact $\ssp K$ \linebreak
and a probability measure \math{\mu} with \math{\dom\mu=\sigmAlg2\scrmt T} and \mathss34{
(\sp K\sp,\sp 1\ssp)\in\mu }, \,and such that \linebreak
$\{\KP1 g\fvalue(\ssp g\fvalue(\ssp x\ssp,\sp z\ssp)\ssp,\sp y\ssp) : 
 z\in A\KP1\} \in \mu\invss44\image\snn\{\ssp 0\ssp\} \ssp$ holds for all \mathss37{
  x\ssp,\sp y\in\Omega}. \inskipline{.5}2

(3) \ Say that \math{S} is {\it prevalent\ssp} in \math{G} if{}f for all \math{
    \Omega} from \math{\Omega=\bigcup\,\taurd G} it follows \inskipline0{8.9}

that \math{S\inc\Omega} and \math{\Omega} is not shy in \math{G} and \math{
\Omega\sp\setminus S} is shy in \mathss33{G}. \inskipline{.5}2

(4) \  Say that \math{F} is {\it shy$^{\,{\fiveroman{LCS}}}$ in itself\ssp} 
    if{}f there is \math{\bosy K\in\setRC} with \math{F\in\LCSps0(K)} \inskipline0{8.9}

and such that \math{\vecs F} is shy in \mathss37{
  (\ssp\ssigrd F\spp,\sp\taurd F\ssp) }.
  \end{definitions}

Note above that \q{\math{\Omega} is not shy in \math{G}} is to be implicitly 
understood to mean that \q{it does not hold that \math{\Omega} is shy in \math{
G}}. Further observe that \ref{Remrare}\,(2) is a particular case of 
\ref{dfShy}\,(2) since one can first restrict {\sl a\ssp} Lebesgue measure to 
some \q{cube} of measure one, and then extend it by zero to all Borel sets.

As an application of our Definitions \ref{dfShy} above, we give the following

\begin{proposition}

Every infinite\ssp-\sp dimensional Silva space is shy$^{\,{\fiveroman{LCS}}}$ 
  in itself.
  \end{proposition}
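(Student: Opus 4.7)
The plan is to express $\vecs F$ as a countable union of compact sets $K_n$ and, for each $K_n$, to produce a compactly supported Borel probability measure $\mu_n$ on $F$ such that every translate $t + K_n$ is $\mu_n$-null. Since the group operation on $F$ is addition, one has $g(g(x,z),y) = x+y+z$, so that $\{\,g(g(x,z),y):z\in K_n\,\}=(x+y)+K_n$; this will verify Definition~\ref{dfShy}\,(2) with the countable family $\{K_n\}_n$ (each compact, hence closed, hence Borel), compact supports $[\,0,1\,]\cdot v_n$, and measures $\mu_n$ constructed below.

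Write $F=\bigcup_n F_n$ as an inductive limit of Banach spaces along compact injective links $F_n\hookrightarrow F_{n+1}$, and let $B_n$ be the closed unit ball of $F_n$. By compactness of the link, $\overline{B_n}^{F_{n+1}}$ is compact in $F_{n+1}$, hence in $F$, so the sets $K_n:=n\cdot\overline{B_n}^{F_{n+1}}$ are compact and Borel in $F$. The classical regularity of Silva (i.e.\ (DFS)) spaces asserts that every bounded subset of $F$ lies in, and is bounded in, some $F_m$; applied to singletons this yields $\vecs F=\bigcup_n K_n$, our countable cover. For each $n$, fix $m=m(n)$ with $K_n\subseteq F_m$, so $\mathbb R(K_n-K_n)\subseteq F_m$. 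The crucial observation is that $F_m$ is a \emph{proper} vector subspace of $F$: if $F_m$ is finite-dimensional this is immediate from the infinite-dimensionality of $F$; if $F_m$ is infinite-dimensional, then the compact injective link $F_m\hookrightarrow F_{m+1}$ has non-closed range (a compact operator with closed range being of finite rank), forcing $F_m\subsetneq F_{m+1}\subseteq F$. In either case, pick $v_n\in\vecs F\setminus F_m$; then $v_n\notin\mathbb R(K_n-K_n)$.

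Define $\mu_n$ as the pushforward of Lebesgue measure on $[\,0,1\,]$ under the continuous injection $s\mapsto s\,v_n$, viewed as a Borel probability measure on $F$ with $\mu_n([\,0,1\,]\cdot v_n)=1$ and compact support $[\,0,1\,]\cdot v_n$. For any $t\in\vecs F$, the set $\{\,s\in[\,0,1\,]:s\,v_n-t\in K_n\,\}$ is Borel and contains at most one point: two distinct parameters $s_1\neq s_2$ would yield $(s_1-s_2)\,v_n\in K_n-K_n$, placing $v_n$ in $\mathbb R(K_n-K_n)$, contrary to our choice. Hence $\mu_n(t+K_n)=0$ for every $t$, and in particular for $t=x+y$ with arbitrary $x,y\in\vecs F$, which verifies Definition~\ref{dfShy}\,(2). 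The only nontrivial step is engineering the transversal direction $v_n$; once the Riesz/closed-range dichotomy secures the properness of each Banach level $F_m$ inside $F$, the remainder is a straightforward one-dimensional slicing argument.
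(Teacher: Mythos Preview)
Your proof is correct and follows the same underlying strategy as the paper's: cover $\vecs F$ by countably many compact balls drawn from the defining Banach steps, and for each such ball exhibit a transversal vector lying outside the Banach level that contains it, so that every translate of the ball meets the line through that vector in at most one point.

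The difference is purely in the choice of test measure. You take the simplest option, the pushforward of Lebesgue measure on $[\,0,1\,]$ along $s\mapsto s\,v_n$, supported on the compact segment $[\,0,1\,]\cdot v_n$. The paper instead invokes the Alexandroff--Urysohn theorem to obtain a continuous surjection $\chi$ from the Cantor set onto the compact ball, pushes the Bernoulli product measure forward through $\chi$, and then integrates this against Lebesgue measure on $[\,0,1\,]$ to obtain a probability measure supported on the ball translated along $[\,0,1\,]\,x_0$. Nullity of translates then follows because the inner integrand vanishes off a single value of $s$; the factor coming from $\chi$ does no real work. Your argument bypasses that surjection machinery entirely, and indeed the paper's own Remark~\ref{Remrare} already observes that one-dimensional Lebesgue slicing suffices to realise Definition~\ref{dfShy}\,(2). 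Your Riesz/closed-range dichotomy for the properness of $F_m$ in $F$ is also more explicit than the paper's bare stipulation that successive Banach levels be strictly increasing.

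One minor point: your single-indexed cover $K_n=n\cdot\overline{B_n}$ exhausts $\vecs F$ only under the standard renormalisation $B_n\subseteq B_{n+1}$ (equivalently, that the step norms are nonincreasing along the chain). The paper states this hypothesis explicitly and uses a double index $(i,j)$ for level and radius; it would do no harm for you to say this as well.
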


\begin{proof} With \math{\bosy K\in\setRC} letting \math{F\in\LCSps0(K)} be an 
infinite\ssp-\sp dimensional Silva space, there are \math{ \bmii8 F \,\in 
 \BaSps0(K)\KP1^\sbbNo} and \math{\bosy\nu\in\Univ\KP1^\sbbNo} with \PouN$\ssp 
F=\lleLCS(\snn\bosy K)-\inf\ssp\rng\bmii8 F$ and such that for all \math{i\in
 \bbNo} we have \math{\bmii8 F\fvalss81 i\ssp\yplus \tvpreceq
                      \bmii8 F\fvalss81 i} with \math{\bosy\nu\fvalss01 i} a 
compatible norm for \math{\bmii8 F\fvalss81 i} such that \math{
(\ssp\bosy\nu\fvalss01 i\ssp)\invss14\image[\KP1 0\,,\sp 1\KPt9] } is \mathss37{
\taurd\spp(\ssp\bmii8 F\fvalss81 i\ssp\yplus\sp) }--\,compact and \PouN$
(\ssp\bosy\nu\fvalss01 i\ssp)\invss14\image[\KP1 0\,,\sp 1\KPt9] \inc$ \PouN$
(\ssp\bosy\nu\fvalss01 i\ssp\yplus\sp)\invss14\image[\KP1 0\,,\sp 1\KPt9] \ssp$ 
holds. Since \math{F} is infinite\ssp-\sp dimensional, we may also arrange 
mat- ters so that \math{ \vecs(\ssp\bmii8 F\fvalss81 i\ssp) \not=
                         \vecs(\ssp\bmii8 F\fvalss81 i\ssp\yplus\sp) } holds.

Now putting \mathss38{ \roman K\KPt8 i\,j = 
 (\ssp\bosy\nu\fvalss01 i\ssp)\invss14\image[\KP1 0\,,\sp 
  j\ssp\yplus\sp\ydot\KP1] }, \,we have \mathss38{ \vecs F \inc 
 \bigcup\,\{\KP1\roman K\KPt8 i\,j:i\ssp,\sp j\in\bbNo\,\} }, and for 
arbitrarily fixed \math{i\ssp,\sp j\in\bbNo} it remains to construct some \mathss37{
\taurd F}--\,compact \linebreak
                     $K\ssp$ and a probability measure \math{\mu} with \math{
\dom\mu=\sigmAlg2\taurd F} and \mathss37{(\sp K\sp,\sp 1\ssp)\in\mu }, \,and 
such that \math{\{\,(\ssp x + z\ssp)\svs F :  z\in \roman K\KPt8 i\,j\KP1\} 
 \in \mu\invss44\image\snn\{\ssp 0\ssp\} } holds for all \math{x\in\vecs F}. 
For this, we use a classical result of Alexandroff and Urysohn, see 
\cite[Problem O\,(e)\ssp, p.\ 166]{Ky}\,, guaranteeing existence of a 
surjection \math{\chi:\bigcup\,\scrmt T\to\roman K\KPt8 i\,j} which is 
continuous \PouN$\scrmt T\to\taurd(\ssp\bmii8 F\fvalss81 i\ssp\yplus\sp) \,$, \,%
when we take \mathss33{ \scrmt T = 
 \Pows(\ssp 2\sp\adot\sp)\expnota^\ssp\sbbNo]_{ti} }. On \math{
\bigcup\,\scrmt T} we then take the countable product measure \math{\mu\ar 0} 
of \mathss38{ \{\,(\ssp\emptyset\,,\sp 0\ssp)\ssp,
                  (\ssp 1\sp\adot\ssp,\sp\frac 12\ssp\big)\ssp,
              \big(\ssp\{\,1\sp\adot\ssp\}\ssp,\sp\frac 12\ssp\big)\ssp,
                  (\ssp 2\sp\adot\ssp,\sp 1\ssp)\,\} }, \,and fixing any \PouN$
x\ar 0\in\vecs(\ssp\bmii8 F\fvalss81 i\ssp\yplus\sp) \setminus
         \vecs(\ssp\bmii8 F\fvalss81 i\ssp)\ssp$ we put \PouN$\ssp K = 
 \{\,(\ssp x + s\,x\ar 0\sp)\svs F : x\in\roman K\KPt8 i\,j\ssp\text{ and }\ssp 
  0\le s\le 1\KPt8\}$ and \math{ \mu = 
 \seqss33{\roman m\,A:A\in\sigmAlg2\taurd F} } where \vskip.7mm\centerline{$
\roman m\,A = \int_{\KP{1.2}[\,0\ssp,\ssp 1\ssp]}\,\mu\ar 0\!\fvalue(\ssp
 \chi\invss44\image(\ssp\roman K\KPt8 i\,j\capss21\{\, x : 
 (\ssp x + s\,x\ar 0\sp)\svs F\in A\KP1\}\sp))\rmdss11\Lebmef^{}\sp(\sp s\sp) \KP1
 $.} \inskipline{.7}0

It is a standard exercise in measure theory left to the reader to verify that 
these $\ssp K$ \linebreak and \math{\mu} do the job we wished.
  \end{proof}

Note above that \math{E\tvpreceq F} means that \math{\idv F = \id(\ssp
 \vecs F\ssp)} is a continuous linear map $F\to E\,$. In \cite[p.\ 7]{HiDim} 
this was written (\ssp possibly\sp) ambiguously \q{$ E\le F\ssp$}.

% ----------------------------------------------------------------------------


\begin{thebibliography}{99} \def\{\font\Å=cmcsc7\Å}

\bibitem{BoSi}\bibname{J. B{\ochnak} and J. S{\iciak}}\ssp: `Analytic 
  functions in topological vector spaces' {\em Studia Math\sp}. {\bf39} (1971) 
  77\,--\,112.
\bibitem{Chris}\bibname{J. P. R. C{\hristensen}}\ssp: `On sets of Haar measure 
  zero in abelian Polish groups' {\em Israel J. Math\sp}. {\bf13} (1972) 
  255\,--\,260.
\bibitem{DS}\bibname{R. D{\ahmen} and A. S{\chmeding}}\ssp: `The Lie group of 
  real analytic diffeomorphisms is not real analytic' {\em preprint}\ssp{\rm, 
  arXiv:1410.8803v2 [math.DG]}.
\bibitem{Du}\bibname{R. M. D{\udley}}\ssp: {\em Real Analysis and
  Probability\sp}, Wadsworth, Pacific Grove 1989.
\bibitem{Glö}\bibname{H. G{\l\"ockner}}\ssp: `Infinite-dimensional Lie groups 
  without completeness restrictions' {\em Geometry and Analysis on Lie 
  Groups\sp}, Banach Center Publications {\bf55}, Warsaw (2002) 43\,--\,59.
\bibitem{HiSM}\bibname{S. H{\iltunen}}\ssp: `Implicit functions from locally 
  convex spaces to Banach spaces' {\em Studia Math\sp}.\ {\bf134} 3 (1999) 
  235\,--\,250.
\bibitem{HiDim}\bibname{S. H{\iltunen}}\ssp: `Differentiation, implicit 
  functions, and applications to generalized wellposedness' {\em preprint\sp} 
  arXiv:math/0504268v3 [math.FA].
\bibitem{SeBGN}\bysame\ssp: `Seip's differentiability concepts as a 
particular case of the Bertram\,--\,Gl\"ockner\,--\,Neeb construction'
  {\em preprint\sp} arXiv:0708.1556v7 [math.FA].
\bibitem{FKBGN}\bysame\ssp: `The Fr\"olicher\,--\,Kriegl differentiabilities 
  as a particular case of the Bertram\,--\,Gl\"ockner\,--\,Neeb construction' 
  {\em preprint\sp} arXiv:0804.4273v1 [math.FA].
\bibitem{Ho}\bibname{J. H{\orv\'ath}}\ssp: {\em Topological Vector Spaces and
  Distributions\sp}, Addison--Wesley, Reading 1966.
\bibitem{Hunt}\bibname{B. R. H{\unt}, T. S{\auer} and J. A. Y{\orke}}\ssp: 
  `Prevalence: a translation-invariant \q{almost every} on 
  infinite-dimensional spaces' {\em Bull. Amer. Math. Soc\sp}. (N.S.) {\bf27} 
  2 (1992) 217\,--\,238.
\bibitem{Is}\bibname{R. I{\srael}}\ssp: `Is the set of entire functions Borel 
  in the space of analytic functions?' {\em Answers\sp} in 
  MathOverflow, http://mathoverflow.net/q/225792, 2015-12-10.
\bibitem{Ke}\bibname{H. H. K{\eller}}\ssp: {\em Differential Calculus in
  Locally Convex Spaces}\sp, Lecture Notes in Math. 417, Springer, Berlin -
  Heidelberg - New York  1974.
\bibitem{Ky}\bibname{J. L. K{\elley}}\ssp: {\em General Topology\sp},
  Graduate Texts in Math. 27, Springer, New York 1985.
\bibitem{KM}\bibname{A. K{\riegl} and P. W. M{\ichor}}\ssp: {\it The
  Convenient Setting of Global Analysis}\sp, Survey 53, Amer. Math. Soc.,
  Providence 1997.
\bibitem{Piz}\bibname{D. P{\izanelli}}\ssp: `Applications analytiques en 
  dimension infinie' {\em Bull. Sci. Math\sp}. {\bf96} 2 (1972) 181\,--\,191.

  \end{thebibliography}
\end{document}